%% 
%% Copyright 2007-2020 Elsevier Ltd
%% 
%% This file is part of the 'Elsarticle Bundle'.
%% ---------------------------------------------
%% 
%% It may be distributed under the conditions of the LaTeX Project Public
%% License, either version 1.2 of this license or (at your option) any
%% later version.  The latest version of this license is in
%%    http://www.latex-project.org/lppl.txt
%% and version 1.2 or later is part of all distributions of LaTeX
%% version 1999/12/01 or later.
%% 
%% The list of all files belonging to the 'Elsarticle Bundle' is
%% given in the file `manifest.txt'.
%% 

%% Template article for Elsevier's document class `elsarticle'
%% with numbered style bibliographic references
%% SP 2008/03/01
%%
%% 
%%
%% $Id: elsarticle-template-num.tex 190 2020-11-23 11:12:32Z rishi $
%%
%%
\documentclass[preprint,12pt]{elsarticle}

%% Use the option review to obtain double line spacing
%% \documentclass[authoryear,preprint,review,12pt]{elsarticle}

%% Use the options 1p,twocolumn; 3p; 3p,twocolumn; 5p; or 5p,twocolumn
%% for a journal layout:
%% \documentclass[final,1p,times]{elsarticle}
%% \documentclass[final,1p,times,twocolumn]{elsarticle}
%% \documentclass[final,3p,times]{elsarticle}
%% \documentclass[final,3p,times,twocolumn]{elsarticle}
%% \documentclass[final,5p,times]{elsarticle}
%% \documentclass[final,5p,times,twocolumn]{elsarticle}

%% For including figures, graphicx.sty has been loaded in
%% elsarticle.cls. If you prefer to use the old commands
%% please give \usepackage{epsfig}

%% The amssymb package provides various useful mathematical symbols
\usepackage{amsmath, amsthm, amssymb, bm, graphicx, mathrsfs}
\usepackage{color}
%% The amsthm package provides extended theorem environments
%% \usepackage{amsthm}
\numberwithin{equation}{section}
\newtheorem{Th}{Theorem}[section]
\newtheorem{Le}{Lemma}[section]
\newtheorem{Co}{Condition}[section]
%% The lineno packages adds line numbers. Start line numbering with
%% \begin{linenumbers}, end it with \end{linenumbers}. Or switch it on
%% for the whole article with \linenumbers.
%% \usepackage{lineno}

%\journal{Communications in Partial Differential Equations}
\journal{CSIAM Transactions on Applied Mathematics}
\begin{document}

\begin{frontmatter}

%% Title, authors and addresses

%% use the tnoteref command within \title for footnotes;
%% use the tnotetext command for theassociated footnote;
%% use the fnref command within \author or \address for footnotes;
%% use the fntext command for theassociated footnote;
%% use the corref command within \author for corresponding author footnotes;
%% use the cortext command for theassociated footnote;
%% use the ead command for the email address,
%% and the form \ead[url] for the home page:
%% \title{Title\tnoteref{label1}}
%% \tnotetext[label1]{}
%% \author{Name\corref{cor1}\fnref{label2}}
%% \ead{email address}
%% \ead[url]{home page}
%% \fntext[label2]{}
%% \cortext[cor1]{}
%% \affiliation{organization={},
%%             addressline={},
%%             city={},
%%             postcode={},
%%             state={},
%%             country={}}
%% \fntext[label3]{}

\title{ { A priori} bounds for elastic scattering by deterministic and random unbounded rough surfaces}

%% use optional labels to link authors explicitly to addresses:
%% \author[label1,label2]{}
%% \affiliation[label1]{organization={},
%%             addressline={},
%%             city={},
%%             postcode={},
%%             state={},
%%             country={}}
%%
%% \affiliation[label2]{organization={},
%%             addressline={},
%%             city={},
%%             postcode={},
%%             state={},
%%             country={}}

\author[inst1]{Tianjiao Wang}

\affiliation[inst1]{organization={School of Mathematical Sciences},%Department and Organization
	addressline={Zhejiang University}, 
	city={Hangzhou},
	postcode={310058}, 
	%            state={Zhejiang},
	country={P. R. China}}

\author[inst2]{Yiwen Lin}
\affiliation[inst2]{organization={School of Mathematical Sciences and Institute of Natural	Sciences},%Department and Organization
	addressline={Shanghai Jiao Tong University}, 
	city={Shanghai},
	postcode={200240}, 
	%            state={Zhejiang},
	country={P. R. China}}

\author[inst1,inst3]{Xiang Xu}
%\author[inst1,inst2]{Author Three}

%\affiliation[inst2]{organization={Department Two},%Department and Organization
	%            addressline={Address Two}, 
	%            city={City Two},
	%            postcode={22222}, 
	%            state={State Two},
	%            country={Country Two}}
\fntext[inst3]{Corresponding author, xxu@zju.edu.cn. This work was supported in part by National Natural Science Foundation of China (11621101, 12071430, 12201404), Key Laboratory of Collaborative Sensing and Autonomous Unmanned Systems of Zhejiang Province, and Postdoctoral Science Foundation of China (2021TQ0203).}
\begin{abstract}
%% Text of abstract
This paper investigates the elastic scattering by unbounded deterministic and random rough surfaces, which both are assumed to be graphs of Lipschitz continuous functions. For the deterministic case,  an \textit{a priori} bound  explicitly dependent on frequencies is derived by the variational approach. For the scattering by random rough surfaces with a random source, well-posedness of the corresponding variation problem is proved. Moreover, a similar bound with explicit dependence on frequencies for the random case is also established based upon the deterministic result, Pettis measurability theorem and Bochner’s integrability Theorem.
\end{abstract}

\begin{keyword}
%% keywords here, in the form: keyword \sep keyword
Elastic wave scattering \sep Unbounded rough surface \sep Variation problem \sep \textit{A priori} bound  
%% PACS codes here, in the form: \PACS code \sep code
%\PACS 0000 \sep 1111
%% MSC codes here, in the form: \MSC code \sep code
%% or \MSC[2008] code \sep code (2000 is the default)
%\MSC 0000 \sep 1111
\end{keyword}

\end{frontmatter}

%% \linenumbers

%% main text
\section{Introduction}
This paper considers mathematical analysis of time-harmonic elastic waves scattered by unbounded deterministic and random rough surfaces in two-dimensions. Elastic scattering problems have received intensive attentions both in mathematics and engineering because of their wide-ranging applications in seismology and geophysics (see \cite{r1,r2,r3}). Mathematically, elastic wave scattering can be formulated as a boundary value problem of the Naiver equation which is more complicated than electromagnetic and acoustic equations.   

Considerable efforts have been devoted to electromagnetic and acoustic rough surface scattering. For instance, Chandler-Wilde and Zhang proposed an upward radiation condition (UPRC) of the Helmholtz equation and studied the Green function and potentials of electromagnetic scattering by rough surfaces in \cite{r5}. Furthermore, they employed an integral equation method to prove the corresponding existence and uniqueness in \cite{r6}. Moreover, variation approaches are utilized to prove the well-posedness based on Rellich identities which imply an \textit{a priori} bound with explicit dependence to the wave number in \cite{r7}. Recently, Chandler-Wilder and Elschner extended the well-posedness in weighted Sobolev spaces by variation approaches and used the finite element method with perfectly matched layer technique to solve acoustic scattering by rough surfaces in \cite{r9}. For the scattering with tapered incident wave by fractal rough surface, Zhang, Ma and Wang used regularized conjugate gradient method to reconstruct the surface in \cite{r26}. Zhang, Wang, Feng and Li \cite{r24} obtained the Fr$\acute{\rm e}$chet derivative of the scattered field which can be used to give numerical methods for shape reconstruction from multi-angle and multi-frequency data. Similar results for general unbound rough surface was given by Zhang and Ma in \cite{r25}. Bao and Zhang realized the reconstruction from multi-frequency phaseless data in \cite{r22} and obtained the uniqueness and existence for direct problem and uniqueness for inverse problem based on boundary integral equations in \cite{r21}. Numerical method for recovering localized perturbation of unbounded surface via near-field  is proposed in \cite{r23} by Bao and Lin.

Compared to electromagnetic and acoustic scattering, results on elastic scattering from unbounded rough surfaces are relatively fewer. Arens investigated the Green tensor, elastic potentials, UPRC and proved uniqueness and existence by integral equation methods in \cite{r10,r11,r12}. Elschner and Hu deduced a transparent boundary condition and proved existence and uniqueness by variation approaches based on the Rellich identity in \cite{r13}. Furthermore, they studied the solvablity in weighted Sobolev spaces, on which they based to prove the existence and uniqueness of elastic scattering by unbounded rough surfaces with a plane or point source incident wave in \cite{r14}. Recently Hu, Li and Zhao generalized the similar results for three-dimensions in \cite{r16}.

For random cases, Warnick and Chew \cite{r8} proposed a numerical method to solve electromagnetic scattering from random rough surfaces. Pembery and Spence \cite{r17} considered the Helmholtz equation in random media and proposed a general framework to study the variation problem, which overcomes the difficulties on both lacks of coercivity and the necessary compactness in Bochner's spaces. Bao, Lin and Xu 
\cite{r18} extended this general framework to obtain an explicit stability result with respect to the wave number for electromagnetic scattering by random periodic surfaces.

In this paper, we derive an \textit{a priori} bound explicitly dependent on the frequency and the measured height for the deterministic elastic scattering by rough surfaces based on Rellich identities. Different from electromagnetic scattering, direct applying Rellich identities is not enough for elastic scattering. By the method in \cite{r13}, we use the \textit{a priori} bound for Helmholtz equations and construct a boundary value problem of a Helmholtz equation to overcome the difficulty.  Moreover, for the random case, we prove the well-posedness of the stochastic variation problem and extends the explicit bound based on the framework in \cite{r17}. The main difference with \cite{r17} is that the variation forms for different samples are defined in different Banach spaces. So we need to use the method of changing variables proposed by Kirsch in \cite{r19} to transform the variation formulas into a deterministic domain but with random medium. And for any given sample, the transformed variation problem would be of the same well-posedness with the original variation problem suppose that we choose a sufficient large measured height such that the transform is invertible. Compared with \cite{r18}, the main difference is the inhomogeneous source term is also random, so we construct a product topology space be the image space of the input map and consider the continuity in the product topology.

The paper is outlined as follows. In Section 2, formulations of deterministic and random rough surfaces scattering are introduced and two corresponding variation problems are proposed respectively. Section 3 is devoted to derive an \textit{a priori} bound with explicit dependence on frequencies and measured height. In Section 4, the well-posedness of random variation problem is derived. Finally, conclusions are given in Section 5. Without additional explanation, $C$ is a constant independent on the frequency $\omega$, the measured height $h$ and Lipschitz constant $L$ in Section 3 and independent on random sampling $\eta$ in Section 2 and Section 4.

\section{Problem formulation}

This section introduces mathematical formulations of deterministic and random elastic scattering by rough surfaces.

\subsection{Deterministic problem}
 As shown in Figure 1, assume $D \subset \mathbb{R}^2$ is an unbounded connected open set in the upper half space. The curve $\partial D=S$ is assumed to be the graph of a Lipschitz continuous function with Lipschitz constant $L$, i.e.,\[
S=\{x \in \mathbb{R}^2: x_2=f(x_1),x_1\in \mathbb{R}\},\]where\[|f(s)-f(t)| \le L |s-t| \quad \forall s,t \in \mathbb{R}.
\]
\begin{figure}
    \centering
    \includegraphics[scale=0.8]{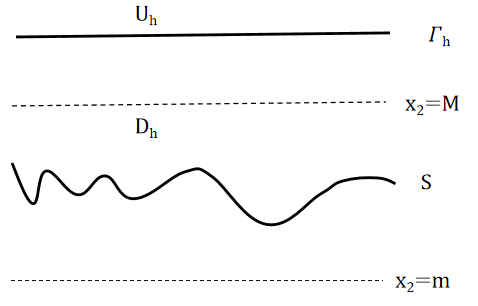}
    \caption{The problem geometry}
    \label{fig:my_label}
\end{figure}
In this paper, the function $f$ is assumed to satisfy $
m<f<M$ with constants $m,M \in \mathbb{R}$.
For $h>M$, denote $\Gamma_h=\{x \in \mathbb{R}^2: x_2=h\}$ and $U_h=\{x \in \mathbb{R}^2: x_2>h\}$. Then $D_h$ is defined by $D_h=D\backslash \bar{U}_h$. Assume the
 inhomogeneous source term $g \in L^2(D)^2$. Its support is assumed to be in $D_h$ in this paper. The elastic wave satisfies the inhomogeneous Navier equations, i.e,\begin{equation*}
     \mu\Delta u+(\mu+\lambda)\nabla(\nabla\cdot u)+\omega^2u=g\quad \text{in}\quad D,
\end{equation*}
where Lam$\acute{\rm e}$ constants $\lambda>0$, $\mu>0$ and frequency $\omega>0$.
For convenience, let \[
\Delta^*u=\mu\Delta u+(\mu+\lambda)\nabla(\nabla\cdot u).
\]
Moreover, throughout this paper, we consider the Dirichlet boundary condition\begin{equation*}
    u=0\quad \text{on}\quad S.
\end{equation*} Next we briefly introduce the transparent boundary condition to reduce the unbounded problem to be bounded, where the details can be found in \cite{r13}.
We begin by the Helmholtz decomposition for $u$:
\begin{equation}  \label{eq2.1}
    u=\frac{1}{i}({\rm grad}\,\phi+\overrightarrow{{\rm curl}}\,\psi)
    \end{equation}
    with
    \begin{equation}  \label{eq2.2}
    \quad \phi :=-\frac{i}{k^2_p}{\rm div}\,u,\psi:=\frac{i}{k^2_s}{\rm curl}\,u,
\end{equation}
where $\overrightarrow{{\rm curl}}=(\partial_2 , -\partial_1 )^\top$, ${\rm curl~} u = \partial_1 u_2 - \partial_2 u_1$.
The scalar functions $\phi$ and $\psi$ satisfy the homogeneous Helmholtz equations
 \begin{equation}  \label{eq2.3}
     (\Delta+k^2_p)\phi=0\quad {\rm and}\quad (\Delta+k^2_s)\psi=0,\quad \text{in}\quad U_h.
 \end{equation}
 The Fourier transform of $\phi$ and $\psi$ has the form
\begin{equation}  \label{eq2.4}
     \hat{\phi}=P_h(\xi)\,\text{exp}(i (x_2-h)\gamma_p(\xi)),\quad
     \hat{\psi}=S_h(\xi)\,\text{exp}(i (x_2-h)\gamma_s(\xi)),
 \end{equation}
  where \[
\gamma_p(\xi)=\sqrt{k^2_p-\xi^2},\, \gamma_s(\xi)=\sqrt{k^2_s-\xi^2}.
\] and $\hat{u}=\mathcal{F}u$ is the Fourier transform of $u$ with respect to $x_1$. Here $P_h(\xi),S_h(\xi) \in L^2(\mathbb{R})$ can be represented by 
    \begin{equation}  \label{eq2.5}
    \left(\begin{array}{cc}
         P_h(\xi)\\
         S_h(\xi)
    \end{array}\right)=\frac{1}{\xi^2+\gamma_p\gamma_s}\left(
    \begin{array}{cc}
         \xi &\gamma_s  \\
         \gamma_p& -\xi
    \end{array}\right)\left(\begin{array}{cc}
    \hat{u}_{s,1}(\xi,h)\\
    \hat{u}_{s,2}(\xi,h)\\
    \end{array}\right).
\end{equation}
 The function $u$ is required to satisfy the upward radiation condition 
 \begin{equation}  \label{eq2.6}
    u=\frac{1}{\sqrt{2\pi}}\int_\mathbb{R}\big( \text{exp}(i x_2\gamma_p(\xi))M_p(\xi)+\text{exp}(i x_2\gamma_s(\xi))M_s(\xi)\big) \hat{u}(\xi,h)\text{exp}(ix\xi)\,\mathrm{d}\xi
\end{equation}
in $U_h$ with
\[
 M_p(\xi)=\frac{1}{\xi^2+\gamma_p\gamma_s}\left(
    \begin{array}{cc}
         \xi^2 &\gamma_s\xi  \\
         \gamma_p\xi& \gamma_p\gamma_s
    \end{array}\right),\, \,
     M_s(\xi)=\frac{1}{\xi^2+\gamma_p\gamma_s}\left(
    \begin{array}{cc}
         \gamma_p\gamma_s &-\gamma_s\xi  \\
         -\gamma_p\xi& \xi^2
    \end{array}\right).
\]
Define a differential operator $T$ by
 \begin{equation}\label{eq2.7}
    Tu:=\mu\partial_nu+(\lambda+\mu)\Vec{n} {\rm div}\,u \quad {\rm on}\quad\Gamma_h.
\end{equation}
Combining \eqref{eq2.6}-\eqref{eq2.7} gives
\begin{equation*}
    Tu=\frac{1}{\sqrt{2\pi}}\int_\mathbb{R}M(\xi)\hat{u}(\xi,h)\text{exp}(ix\xi)\,\mathrm{d}\xi,
\end{equation*}
where \begin{equation} \label{eq2.8}
  M(\xi)=  \frac{i}{\xi^2+\gamma_p\gamma_s}\left(
    \begin{array}{cc}
         \omega^2\gamma_p &-\xi\omega^2+\xi\mu(\xi^2+\gamma_p\gamma_s)  \\
         \xi\omega^2-\xi\mu(\xi^2+\gamma_p\gamma_s)& \omega^2\gamma_s
    \end{array}\right).
\end{equation}Then the Dirichlet to Neumann (DtN) operator $\mathcal{T}$ can be defined by \begin{equation*}
\mathcal{T}f:=\mathcal{F}^{-1}(M\hat{f}),\quad f\in H^{1/2}(\mathbb{R}).
\end{equation*}
Therefore, the transparent boundary condition can be given by
\[
Tu=\mathcal{T}u \quad \text{on}\quad \{x_2=h\}.
\] Furthermore, according to the above TBC, the original scattering problem in $D$ can be reduced into $D_h$:
\begin{align*}
	\begin{array}{rll}
\Delta^*u+\omega^2u=g & {\rm in} &D_h, \\
u=0& {\rm on} &S, \\
Tu=\mathcal{T}u & {\rm on}&\Gamma_h.
\end{array}
\end{align*}
In order to investigate the variation formulation of this reduced problem, we introduce a function space \[V_h(D_h):= \{ u \in H^1(D_h)^2: u=0\, {\rm ~on~}\, S \}.\]For convenience, denote $V_h=V_h(D_h)$.
Suppose $u,\,v \in V_h$, the Betti formula gives
\begin{equation*}
    -\int_{D_h}g\cdot \bar{v}\,\text{d}x=-\int_{D_h} (\Delta^*+\omega^2)u\cdot\bar{v}\,\mathrm{d}x= \int_{D_h} \mathcal{E}(u,\bar{v})-\omega^2 u\cdot\bar{v}\,\mathrm{d}x-\int_{\Gamma_h}\mathcal{T}u\cdot\bar{v}\,\mathrm{d}s,
\end{equation*}
where\begin{equation*}
    \mathcal{E}(u,v)=\mu(\nabla u_1\cdot\nabla v_1+\nabla u_2\cdot \nabla v_2)+(\lambda+\mu)(\nabla\cdot u)(\nabla\cdot v).
\end{equation*}
Define the sesquilinear form $B\, :V_h\times V_h\to \mathbb{C}$ by 
\begin{equation*}
    B(u,v)=
\int_{D_h} \mathcal{E}(u,\bar{v})-\omega^2 u\cdot\bar{v}\,\mathrm{d}x-\int_{\Gamma_h}\mathcal{T}u\cdot\bar{v}\,\mathrm{d}s.
\end{equation*}
Now we can give the variation formula for deterministic problem.

\emph{Variation problem 1} (VP 1): Find $u\in V_h$ such that \[B(u,v)=-(g,v)_{D_h} ,\,\forall  v\in V_h.\]
\subsection{Random problem}
Let  $(\Omega,\mathcal{A},\mathbb{P})$ be a complete probability space. Denote by $S(\eta)$ a random surface 
\[
S(\eta):=\{x \in \mathbb{R}^2 :x_2=f(\eta;x_1),\eta \in \Omega, x_1 \in \mathbb{R}\}.
\] Similarly, $D(\eta)$ and $D_h(\eta)$ represent the random counterparts of $D$ and $D_h$, respectively. Assume $f(\eta;x_1)$ is a Lipschitz continuous function with Lipschitz constant $L(\eta)$ for all $\eta \in \Omega$ and it also satisfies $m<f(\eta; x_1)<M$.
The random inhomogeneous source $g(\eta)$ is assumed to satisfy $g(\eta) \in L^2(D(\eta))^2$ with its support in $D_h(\eta)$. Similarly as the deterministic case, we can give the following random boundary value problem.
\begin{align*}
	\begin{array}{rll}
\Delta^*u(\eta;\cdot)+\omega^2u(\eta;\cdot)=g(\eta;\cdot)&{\rm in}&D_h(\eta), \\
u(\eta;\cdot)=0&{\rm on}&S(\eta), \\
Tu(\eta;\cdot)=\mathcal{T}u(\eta;\cdot)&{\rm on}&\Gamma_h.
\end{array}
\end{align*}
For simplicity, let $V_h(\eta)=V_h(D_h(\eta))$.
Define a sesquilinear form $\tilde{B}_\eta$ on $V_h(\eta)\times V_h(\eta)$ by \begin{equation}  \label{eq2.9}
    \tilde{B}_\eta(u,v)=
\int_{D_h(\eta)} \mathcal{E}(u,\bar{v})-\omega^2 u\cdot\bar{v}\,\mathrm{d}x-\int_{\Gamma_h}\mathcal{T}u\cdot\bar{v}\,\mathrm{d}s,
\end{equation}
and an antilinear functional $\tilde{G}_\eta$ on $V_h(\eta)$ by
\begin{equation}  \label{eq2.10}
    \tilde{G}_\eta(v):=-\int_{D_h(\eta)}g(\eta)\cdot\bar{v}\,\text{d}x.
\end{equation} Then we want to define the stochastic variation problem. Direct definition is not allowed because $V_h(\eta)$ is dependent on $\eta$. By the method in \cite{r19}, variable transform can give a new sesquilinear form defined on $V_h \times V_h$. This implies that we can define stochastic variation problem after variable transform. Let $f_0=f(\eta_0)$ and $g_0=g(\eta_0)$ for some fixed $\eta_0 \in \Omega$. Then let $D=D(\eta_0)$, $D_h=D_h(\eta_0)$  and $ V_h=V_h(\eta_0)$ for convenience.

In addition, we assume $g(\eta) \in H^1(D(\eta))^2$
and $f(\eta)$ is assumed to satisfy \[
\|f(\eta)-f_0\|_{1,\infty} \le M_0, \quad \forall \eta \in \Omega,
\] with constant $M_0 >0$. The measured height $h$ is chosen such that\begin{equation}\label{eq2.11}
    (M-m)/\gamma<1,
\end{equation} where $\gamma=h-\sup\limits_{x_1}f_0(x_1)$. 

Denote by $Lip (\mathbb{R})$ the set including all Lipschitz continuous functions on $\mathbb{R}$. Then define a product topology space \[
\mathcal{C}= \mathcal{C}_1 \times \mathcal{C}_2, 
\]where\[
\mathcal{C}_1:=\{v \in Lip(\mathbb{R}) :m<v<M, \|v-f_0\|_{1,\infty}\le M_0\},
\] with constant $M_0>0$
and\[
\mathcal{C}_2:=H^1_0(D_h)^2.
\]The topology of $ \mathcal{C}_1$ and $\mathcal{C}_2$ is respectively given by norm $\|\cdot\|_{1,\infty}$ and $\|\cdot\|_{H^1(D_h)^2}$. 
%Then the topology of $\mathcal{C}$ is given by product topology.

Consider the transform $\mathcal{H}$: $D_h \to D_h(\eta)$ defined by \begin{equation*}
    \mathcal{H}(y)=y_2+\alpha(y_2-f_0(y_1))(f(\eta; y_1)-f_0(y_1))e_2,\quad y \in D_h,
\end{equation*}
where $e_2$ is the unit vector in $x_2$ direction and $\alpha(x)$ is a cutoff function which satisfies\[
\alpha(x)=\left\{\begin{array}{cc}
    0, &  x<\delta, \\
    1, &  x>\gamma,
\end{array}\right.
\]with sufficiently small $\delta$.
It is also required to satisfy\begin{equation}
    \label{eq2.12}|\alpha'|<1/(\gamma-2\delta).
\end{equation}
The Jacobi matrix of $\mathcal{H}$ is\begin{equation*}
    \mathcal{J}_\mathcal{H}=I_2+\left(\begin{array}{cc}
        0 & 0 \\
        J_1 & J_2
    \end{array}\right),
\end{equation*}
where \begin{align*}
    J_1&=\alpha(y_2-f_0(y_1))(f'(\eta;y_1)-f'_0(y_1))-\alpha'(y_2-f_0(y_1))f'_0(y_1)(f(\eta;y_1)-f_0(y_1)),\\
    J_2&=\alpha'(y_2-f_0(y_1))(f(\eta;y_1)-f_0(y_1)).
\end{align*}
Since matrix $\mathcal{J}_\mathcal{H}$ is required to be non-singular so that $\mathcal{H}$ is invertible, according to \eqref{eq2.12},  we obtain \[
|J_2|<\frac{M-m}{\gamma-2\delta}.
\] Hence, by \eqref{eq2.11}, we can choose $\delta$ sufficiently small such that \begin{equation}\label{eq2.13}
|J_2|<\frac{M-m}{\gamma-2\delta}<1,
\end{equation} which implies that $\mathcal{H}$ is invertible.
It is easy to verify $\mathcal{H}(\Gamma_h)=\Gamma_h$. For $u,v \in V_h(\eta)$, taking $x=\mathcal{H}(y)$ in \eqref{eq2.9} yields
 \begin{align*}
    \tilde{B}_\eta(u,v)=&\mu\int_{D_h} \sum_{j=1}^2\nabla\tilde{u}_j\mathcal{J}_{\mathcal{H}^{-1}}\mathcal{J}_{\mathcal{H}^{-1}}^\top\nabla \bar{\tilde{v}}_j\det{\mathcal{J}_\mathcal{H}}\,\text{d}y \\
    &+(\lambda+\mu)\int_{D_h} (\nabla\tilde{u}:\mathcal{J}_{\mathcal{H}^{-1}})(\nabla\bar{\tilde{v}}:\mathcal{J}_{\mathcal{H}^{-1}}^\top)\det{\mathcal{J}_\mathcal{H}}\,\text{d}y \\
    &-\omega^2\int_{D_h}\tilde{u}\cdot\bar{\tilde{v}}\det{\mathcal{J}_{\mathcal{H}}}\,\text{d}y 
    -\int_{\Gamma_h}\mathcal{T}\tilde{u}\cdot\bar{\tilde{v}} \,\text{d}s(y),
\end{align*}
 where $\tilde{u}=u\circ\mathcal{H}$, $\tilde{v}=v\circ\mathcal{H}$ and \[
A:B=\text{tr}(B^\top A) \quad A,B \in \mathbb{C}^{2\times2}.
 \]
 Similarly, for $v \in V_h(\eta)$, let $x=\mathcal{H}(y)$ in \eqref{eq2.10}, \begin{equation*}
     \tilde{G}_\eta(v)=-\int_{D_h}\tilde{g}(\eta)\cdot\bar{\tilde{v}}\det{\mathcal{J}_\mathcal{H}}\,\text{d}x.
 \end{equation*} 
 Recall that we require $g(\eta) \in H^1(D(\eta))^2$ and the support of $g(\eta)$ is in $D_h(\eta)$, we have $\tilde{g}(\eta) \in H^1_0(D_h)^2$ for all $\eta$. So we can define the input map $c$ : $\Omega\to\mathcal{C}$ by \[
c(\eta):=(f(\eta),\tilde{g}(\eta)).
\]
  Note that $\tilde{u},\tilde{v} \in V_h$. Thus we can define a continuous sesquilinear form $B_{c(\eta)}(u,v)$ on $V_h \times V_h$ by \begin{align}\label{eq2.14}
B_{c(\eta)}(u,v):=&\mu\int_{D_h} \sum_{j=1}^2\nabla{u}_j\mathcal{J}_{\mathcal{H}^{-1}}\mathcal{J}_{\mathcal{H}^{-1}}^\top\nabla\bar{{v}}_j\det{\mathcal{J}_\mathcal{H}}\,\text{d}y \notag \\
    &+(\lambda+\mu)\int_{D_h} (\nabla{u}:\mathcal{J}_{\mathcal{H}^{-1}})(\nabla\bar{{v}}:\mathcal{J}_{\mathcal{H}^{-1}}^\top)\det{\mathcal{J}_\mathcal{H}}\,\text{d}y \notag \\
    &-\omega^2\int_{D_h}{u}\cdot\bar{{v}}\det{\mathcal{J}_\mathcal{H}}\,\text{d}y 
    -\int_{\Gamma_h}\mathcal{T}{u}\cdot\bar{{v}} \,\text{d}s(y).
  \end{align}
  It is easy to see\begin{equation}\label{eq2.15}
      \tilde{B}_\eta(u,v)=B_{c(\eta)}(\tilde{u},\tilde{v}).
  \end{equation}
 Similarly we can define an antilinear functional $G_{c(\eta)}$ on $V_h$ by \begin{equation}\label{eq2.16}
     G_{c(\eta)}(v):=-\int_{D_h}\tilde{g}(\eta)\cdot\bar{{v}}\det{\mathcal{J}_\mathcal{H}}\,\text{d}x.
 \end{equation}
Obviously, the identity \begin{equation}\label{eq2.17}
     G_{c(\eta)}(\tilde{v})=\tilde{G}_\eta(v)
\end{equation} holds.
 
 Then the sesquilinear form $\tilde{\mathcal{B}}$ on $L^2(\Omega; V_h)\times L^2(\Omega; V_h)$ can be defined by\begin{equation*}
   \mathcal{B}(u,v):=\int_\Omega B_{c(\eta)}(u,v)\,\text{d}\mathbb{P}(\eta).
\end{equation*}
and the antilinear functional $\mathcal{G}$ is defined on $L^2(\Omega;V_h)$ by\begin{equation*}
    \mathcal{G}(v):=\int_\Omega G_{c(\eta)}(v)\,\text{d}\mathbb{P}(\eta).
\end{equation*}
For convenience, we regard sesquilinear form $B_{c(\eta)}$ : $V_h \times V_h \to \mathbb{C}$ as the same operator in $B(V_h, V_h^*)$ generated by it. Here $V_h^*$ is the dual space of $V_h$ and $ B(X,Y)$ denote the space including all bounded linear operators $X \to Y$. Similarly to \eqref{eq2.14} and \eqref{eq2.16}, we can define the sesquilinear form $B_{(\phi,\psi)}$ and the antilinear functional $G_{(\phi,\psi)}$ for all $(\phi,\psi) \in \mathcal{C}$. Then we can define the map $\mathscr{B}$: $\mathcal{C} \to B(V_h,V_h^*) $ by\[
\mathscr{B}((\phi,\psi)):=B_{(\phi,\psi)}
\]
and the map $\mathscr{G}$ : $\mathcal{C} \to  V_h^* $ by
\[
\mathscr{G}((\phi,\psi)):=G_{(\phi,\psi)}.
\]
Now we can define the stochastic variation problem as follows. \par
\emph{Variation problem 2} (VP 2): Find $u \in L^2(\Omega;V_h)$ such that \begin{equation*}
    \mathcal{B}(u,v)=\mathcal{G}(v), \quad \forall v\in L^2(\Omega;V_h).
\end{equation*}

The two variation problems are considered respectively in the following two sections. 
\section{An \textit{a priori} bound for deterministic case}

This section will give an \textit{a priori} bound explicitly dependent on $\omega$, $h$ and $L$. 
Because the matrix $M(\xi)$ is the symbol of the DtN operator, we firstly consider its properties given by the following lemma which shows that the DtN operator is continuous, the real part of $M$ is negative definite when $|\xi|>k_s$ and $M$ is Lipschitz continuous with respect to $\omega$ when $|\xi| \le k_s$. 
\begin{Le}
    (i) For $\xi, \omega \in \mathbb{R}$, $\|M(\xi)\| \le C(\omega) (1+\xi^2)$ and hence the DtN operator $\mathcal{T}$ is continuous. The constant $C(\omega)>0$ is dependent on $\omega$ but independent on $\xi$.
    (ii) For $|\xi|>k_s$ and $\omega \in \mathbb{R}$, $-\Re M(\xi)>0$.
    (iii) For $|\xi|\le k_s$ and $\omega \in \mathbb{R}$, $\|M(\xi)\| \le C\omega$.
\end{Le}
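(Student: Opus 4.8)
The plan is to argue directly from the explicit symbol $M(\xi)$ in \eqref{eq2.8}, estimating its four entries separately and using the wave-number relations $k_s^2=\omega^2/\mu$ and $k_p^2=\omega^2/(\lambda+2\mu)$, so that $k_p<k_s$ and the ratio $\kappa:=k_p/k_s=\sqrt{\mu/(\lambda+2\mu)}$ is independent of $\omega$. All three estimates hinge on the scalar denominator $d(\xi):=\xi^2+\gamma_p\gamma_s$, with $\gamma_p,\gamma_s$ on the branch $\Im\gamma\ge0$ forced by the radiation condition \eqref{eq2.6} (so $\gamma=i\sqrt{\,\cdot\,}$ beyond the corresponding wave number). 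The first step is to show $d$ never vanishes, by a short case split: for $|\xi|<k_p$ both roots are real and $d>0$; for $k_p<|\xi|<k_s$ the product $\gamma_p\gamma_s$ is purely imaginary, so $\Re d=\xi^2>0$; and for $|\xi|>k_s$ one has $d=\xi^2-\sqrt{(\xi^2-k_p^2)(\xi^2-k_s^2)}$, which is real and, after squaring, strictly positive because $\xi^2>k_s^2>k_p^2k_s^2/(k_p^2+k_s^2)$. This yields a positive lower bound for $|d|$ on every bounded set, together with $d\to(k_p^2+k_s^2)/2$ as $|\xi|\to\infty$.

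For (i) I would note that each numerator entry of $M$ grows at most linearly: the diagonal entries are $\omega^2\gamma_{p,s}$ with $|\gamma_{p,s}|\le(k_{p,s}^2+\xi^2)^{1/2}$, and after dividing by $d$ the off-diagonal entries become $\pm i\mu\xi$ plus a remainder $\mp i\xi\omega^2/d$. With $|d|$ bounded below this gives $\|M(\xi)\|\le C(\omega)(1+|\xi|)\le C(\omega)(1+\xi^2)$, and continuity of $\mathcal{T}$ between the relevant trace spaces follows from Parseval's identity and Cauchy--Schwarz applied to $\int_\mathbb{R}(M\hat u)\cdot\overline{\hat v}\,\mathrm{d}\xi$.

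For (ii), on $|\xi|>k_s$ I would set $\gamma_p=i\beta_p$, $\gamma_s=i\beta_s$ with $\beta_{p,s}=\sqrt{\xi^2-k_{p,s}^2}>0$, so that $d=\xi^2-\beta_p\beta_s>0$ is real. A direct substitution shows the diagonal entries $M_{11}=-\omega^2\beta_p/d$ and $M_{22}=-\omega^2\beta_s/d$ are real and strictly negative while the off-diagonal entries are purely imaginary; hence $-\Re M(\xi)$ is diagonal with strictly positive diagonal. If one needs positivity of the full Hermitian part $-\tfrac12(M+M^*)$, the diagonal is unchanged and the determinant condition reduces, via $\omega^2=\mu k_s^2$ and $\beta_p-\beta_s=(k_s^2-k_p^2)/(\beta_p+\beta_s)$, to $k_s^4\beta_p(\beta_p+\beta_s)^2>\xi^2\beta_s(k_s^2-k_p^2)^2$, which is checked directly.

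Part (iii) carries the genuine $\omega$-dependence and is where I expect the real work. The idea is to non-dimensionalise through $s=\xi/k_s\in[-1,1]$, giving $d(\xi)=k_s^2\Phi(s)$ with $\Phi(s)=s^2+\sqrt{(\kappa^2-s^2)(1-s^2)}$ depending only on the fixed ratio $\kappa$. Because $\Phi$ is continuous and, by the non-vanishing of $d$, bounded below in modulus by a constant $c_0>0$ depending only on $\kappa$, one obtains the crucial lower bound $|d(\xi)|\ge c_0k_s^2=c\,\omega^2$, while simultaneously $|\gamma_{p,s}|\le k_s$ and $|d(\xi)|\le Ck_s^2$. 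Feeding these into the four entries, every numerator is $O(\omega^2k_s)=O(\omega^3)$ whereas the denominator is $\gtrsim\omega^2$, so $\|M(\xi)\|\le C\omega$ with $C$ depending only on the Lam\'e constants. The main obstacle is precisely this $\omega$-free lower bound on the rescaled denominator $\Phi$, together with careful tracking of the powers of $\omega$; the non-vanishing analysis of the first step is exactly what makes the bound available.
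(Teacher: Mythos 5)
Your part (iii) --- the only part the paper actually proves (for (i) and (ii) it simply cites Lemma 2 of \cite{r13}) --- is correct and in substance identical to the paper's argument: both rest on a lower bound of order $\omega^2$ for the denominator $\rho=\xi^2+\gamma_p\gamma_s$ combined with numerators of order $\omega^3$ (plus the term $\mu|\xi|\le\mu k_s$). The paper obtains this via the case split $|\xi|\le k_p$, where $k_p^2\le\rho\le k_pk_s$, and $k_p<|\xi|\le k_s$, where $k_p^2<|\rho|\le k_s^2$; your rescaling $s=\xi/k_s$, $\rho=k_s^2\Phi(s)$, is the same estimate in disguise. In fact your compactness appeal for the constant $c_0$ is unnecessary: $|\Phi(s)|\ge\kappa^2$ holds explicitly, since for $|s|\le\kappa$ the inequality $\sqrt{1-s^2}\ge\sqrt{\kappa^2-s^2}$ gives $\Phi\ge s^2+(\kappa^2-s^2)=\kappa^2$, while for $\kappa<|s|\le1$ one has $|\Phi|\ge\Re\Phi=s^2>\kappa^2$ --- which is precisely the paper's bound $\rho\ge k_p^2$.

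Your direct proofs of (i) and (ii) go beyond what the paper does, and your linear-growth bound in (i) is the sharper statement actually needed for $\mathcal{T}:H^{1/2}\to H^{-1/2}$. One slip in (ii) should be flagged: under the paper's convention $\Re M:=(M+\bar{M}^\top)/2$, the claim that ``$-\Re M(\xi)$ is diagonal'' is false. For $|\xi|>k_s$ the off-diagonal entries are $M_{12}=ib$, $M_{21}=-ib$ with $b=\xi(\mu d-\omega^2)/d\in\mathbb{R}$, and these purely imaginary, skew entries \emph{survive} in the Hermitian part (they would only cancel in the entrywise real part). So your ``fallback'' determinant argument is not optional but is the actual proof; fortunately it closes: using $\mu d-\omega^2=\mu\beta_s(\beta_s-\beta_p)$, $\beta_p-\beta_s=(k_s^2-k_p^2)/(\beta_p+\beta_s)$, $\omega^2=\mu k_s^2$, the positivity of $\det(-\Re M)$ is equivalent to your inequality $k_s^4\beta_p(\beta_p+\beta_s)^2>\xi^2\beta_s(k_s^2-k_p^2)^2$, and since $\beta_p\ge\beta_s$, $(\beta_p+\beta_s)^2\ge\beta_p^2=\xi^2-k_p^2$ and $k_s^4-(k_s^2-k_p^2)^2=k_p^2(2k_s^2-k_p^2)$, it reduces to $\xi^2(2k_s^2-k_p^2)>k_s^4$, which holds for $\xi^2>k_s^2$. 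With that correction, the proposal is complete and correct.
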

Here $\Re M:=(M+\bar{M}^\top)/2$ and norm $\|\cdot\|$ is defined by $\|A\|:=\max\limits_{i,j}|a_{ij}|$. See Lemma 2 in \cite{r13} for the proof of (i) and (ii). We only prove (iii).
\begin{proof}
Let $\rho=\xi^2+\gamma_p\gamma_s$.
     For $|\xi|\le k_p$, it is easy to see\[
k_p^2\le \rho \le k_pk_s.
     \]
     So we have \begin{equation} \label{eq3.1}
|\omega^2\gamma_p|/\rho\le \omega^2 k_p/k_p^2\le C\omega,
     \end{equation}
     \begin{equation} \label{eq3.2}
     |\omega^2\gamma_s|/\rho\le \omega^2 k_s/k_p^2\le C\omega,
     \end{equation}
     and
     \begin{equation} \label{eq3.3}
|\xi\omega^2-\xi\mu\rho|/\rho \le \omega^2 k_p/k_p^2+\mu k_p \le C\omega.
     \end{equation}
     Combining \eqref{eq3.1}-\eqref{eq3.3} implies
     \[
\|M(\xi)\|\le C\omega,\quad |\xi|\le k_p.
     \]
     For $k_p<|\xi|\le k_s$, we have $k_p^2<|\rho|\le k_s^2$. So it is similar to get \[
\|M(\xi)\|\le C\omega,\quad k_p<|\xi|\le k_s,
     \] which completes the proof.
\end{proof}
Next we give another lemma which can be proved straightly by combining (2.1), (2.5), (2.7)-(2.8) and the variation formula (see \cite{r13}).
\begin{Le}
For the solution $u\in {V}_h \cap H^2(D_h)^2$ to Variation problem 1, the inequality
\begin{equation*}
    \int_{\Gamma_h}\{2\Re (\mathcal{T}u\cdot \partial_2\bar{u})-\mathcal{E}(u,\bar{u})+\omega^2|u|^2\}\,\text{d}s \le 2k_s \Im \int_{D_h} g\cdot \bar{u}\,\text{d}x
    \end{equation*} holds.
\end{Le}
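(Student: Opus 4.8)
The plan is to push the whole estimate onto the artificial boundary $\Gamma_h$ and to turn it into a pointwise inequality in the Fourier variable. The point is that the source $g$ should enter only through the imaginary part of the variational identity --- this is exactly what manufactures the right-hand side $2k_s\Im\int_{D_h}g\cdot\bar u\,\mathrm dx$ --- while every term appearing on the left is a pure trace quantity, completely determined by $\hat u(\cdot,h)$ through the upward radiation condition and the explicit Dirichlet-to-Neumann symbol. Once both sides are written by Parseval as integrals over $\xi$ of Hermitian forms in $\hat u(\xi,h)\in\mathbb C^2$, it suffices to compare the two integrands at each fixed $\xi$.

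First I would record the energy identity. Taking $v=u$ in Variation problem~1 gives $B(u,u)=-(g,u)_{D_h}$, and since $\int_{D_h}\mathcal E(u,\bar u)\,\mathrm dx$ and $\omega^2\int_{D_h}|u|^2\,\mathrm dx$ are real, taking imaginary parts leaves
\[\Im\int_{D_h}g\cdot\bar u\,\mathrm dx=\Im\int_{\Gamma_h}\mathcal Tu\cdot\bar u\,\mathrm ds,\]
so it is enough to bound the left-hand boundary integral of the lemma by $2k_s\,\Im\int_{\Gamma_h}\mathcal Tu\cdot\bar u\,\mathrm ds$. Next I would express every trace on $\Gamma_h$ through $\hat u(\xi,h)$: from the representation $\hat u(\xi,x_2)=\big(e^{i(x_2-h)\gamma_p}M_p+e^{i(x_2-h)\gamma_s}M_s\big)\hat u(\xi,h)$ built from \eqref{eq2.1}, \eqref{eq2.4}--\eqref{eq2.6} together with $M_p+M_s=I$, differentiation in $x_2$ gives $\widehat{\partial_2u}(\xi,h)=N(\xi)\hat u(\xi,h)$ with $N=i(\gamma_pM_p+\gamma_sM_s)$, while \eqref{eq2.7}--\eqref{eq2.8} give $\widehat{\mathcal Tu}(\xi,h)=M(\xi)\hat u(\xi,h)$ and $\widehat{\partial_1u}=i\xi\hat u$. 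Parseval's theorem then turns each of $\int_{\Gamma_h}2\Re(\mathcal Tu\cdot\partial_2\bar u)\,\mathrm ds$, $\int_{\Gamma_h}\mathcal E(u,\bar u)\,\mathrm ds$, $\int_{\Gamma_h}\omega^2|u|^2\,\mathrm ds$ and $\Im\int_{\Gamma_h}\mathcal Tu\cdot\bar u\,\mathrm ds$ into an integral over $\mathbb R$ of an explicit form in $\hat u(\xi,h)$ assembled from $M$, $N$, $\xi$, $\gamma_p$, $\gamma_s$; here the splitting $\hat u=M_p\hat u+M_s\hat u$ into compressional and shear parts largely diagonalizes the forms into a $\gamma_p$-piece and a $\gamma_s$-piece.

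With this reduction the lemma follows from the pointwise estimate
\begin{align*}
2\Re\big((M\hat u)\cdot\overline{N\hat u}\big)&-\mu\big(\xi^2|\hat u|^2+|N\hat u|^2\big)-(\lambda+\mu)\big|i\xi\hat u_1+(N\hat u)_2\big|^2\\
&+\omega^2|\hat u|^2\le 2k_s\,\Im\big(\overline{\hat u}^\top M\hat u\big)
\end{align*}
for every $\xi\in\mathbb R$ and $\hat u\in\mathbb C^2$, after which integration in $\xi$ and the energy identity close the argument. I expect this pointwise step to be the main obstacle, and I would split it into the regimes $|\xi|<k_p$, $k_p<|\xi|<k_s$ and $|\xi|>k_s$. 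When $|\xi|>k_s$ both $\gamma_p,\gamma_s$ are purely imaginary; a direct inspection of $M$ and $N$ shows the left-hand form is then nonpositive while the right-hand contribution vanishes, so the inequality is trivial, and the range $k_p<|\xi|<k_s$ is the same with only the shear mode propagating. In the propagating ranges the $p$- and $s$-contributions reduce to the scalar Helmholtz forms, behaving like $2\gamma_p^2|\cdot|^2$ and $2\gamma_s^2|\cdot|^2$ on the left against $2k_s\gamma_p|\cdot|^2$ and $2k_s\gamma_s|\cdot|^2$ on the right, so that the common factor $2k_s$ is dictated by the bounds $\gamma_p\le k_p\le k_s$ and $\gamma_s\le k_s$; this is precisely why the largest wavenumber $k_s$ must appear. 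The genuinely delicate point is to show that the $p$--$s$ cross terms produced by $M$, $N$ and the $(\lambda+\mu)$-term either cancel or can be absorbed into the diagonal pieces, and this is where the explicit entries of $M_p$, $M_s$ and \eqref{eq2.8} are needed.
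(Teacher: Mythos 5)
Your proposal is correct and takes essentially the same route as the paper, whose proof of this lemma consists precisely of combining the Helmholtz decomposition \eqref{eq2.1}, the mode representation \eqref{eq2.5}, the DtN symbol \eqref{eq2.7}--\eqref{eq2.8} and the variational identity with $v=u$, deferring the algebra to \cite{r13}: there, as in your sketch, the energy identity moves $g$ to $\Gamma_h$, Parseval reduces everything to a pointwise inequality in $\xi$ for the compressional and shear modes, and the inequality is checked by case-splitting at $|\xi|=k_p$ and $|\xi|=k_s$ with $\gamma_p\le k_p\le k_s$, $\gamma_s\le k_s$ forcing the factor $2k_s$. The cross-term verification you flag as the delicate point is exactly the computation carried out in the cited reference, so your outline matches the paper's proof.
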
 
Now we proceed to prove the \textit{a priori} bound. The strategy is to utilize Rellich identity to estimate $\text{div}\,u$ and $\text{curl}\,u$ on $S$  under the assumption that $g$ and $f$ have sufficient regularities. %The Rellich identity can be obtained via multiplying the Navier equations by $\partial_2 u$.
\begin{Le}
    Suppose that $g\in H^1(D)^2$, $f\in C^2(\mathbb{R})$ and $u$ is solution to Variation problem 1. Denote the constant by \[ C_1(L,\omega,h):=C (1+L^2)^{1/2}(\omega(h-m)+1).
    \] Then the inequality\begin{equation*}
\|{\rm div}u\|^2_{L^2(S)}+\|{\rm curl}u\|^2_{L^2(S)} \le C_1(L,\omega,h)\|g\|_{L^2(D_h)^2}\|\partial_2 u\|_{L^2(D_h)^2}
    \end{equation*} holds.
\end{Le}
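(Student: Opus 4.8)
The plan is to apply a Rellich-type identity to the Navier equation with the constant-direction multiplier $\partial_2\bar u$ and then isolate the contribution coming from the surface $S$. Testing $\Delta^*u+\omega^2u=g$ against $\partial_2\bar u$, integrating over $D_h$, and integrating by parts twice — using that $\partial_2$ commutes with $\nabla$ and ${\rm div}$, so that $2\Re\,\mathcal{E}(u,\partial_2\bar u)=\partial_2\mathcal{E}(u,\bar u)$ and $2\Re(\omega^2 u\cdot\partial_2\bar u)=\omega^2\partial_2|u|^2$ — I expect to obtain
\[
2\Re\int_{D_h}g\cdot\partial_2\bar u\,\mathrm{d}x=\int_{\partial D_h}\big[2\Re(Tu\cdot\partial_2\bar u)-n_2\big(\mathcal{E}(u,\bar u)-\omega^2|u|^2\big)\big]\,\mathrm{d}s,
\]
where $n=(n_1,n_2)$ is the outward unit normal and $\partial D_h=S\cup\Gamma_h$. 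On $\Gamma_h$ the normal is $e_2$ and $Tu=\mathcal{T}u$, so the $\Gamma_h$ density is precisely the integrand of the preceding lemma; this lets me substitute the bound $\int_{\Gamma_h}\{2\Re(\mathcal{T}u\cdot\partial_2\bar u)-\mathcal{E}(u,\bar u)+\omega^2|u|^2\}\,\mathrm{d}s\le 2k_s\Im\int_{D_h}g\cdot\bar u\,\mathrm{d}x$ directly.

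The crucial step is evaluating the boundary density on $S$ using the Dirichlet condition $u=0$. Since $u$ vanishes on $S$, its tangential derivative vanishes there, so $\nabla u_j=(\partial_n u_j)n$ and hence $\partial_2 u=(\partial_n u)n_2$. A direct computation then gives, on $S$, both $\mathcal{E}(u,\bar u)=\mu|\partial_n u|^2+(\lambda+\mu)|{\rm div}\,u|^2$ and $2\Re(Tu\cdot\partial_2\bar u)=2n_2[\mu|\partial_n u|^2+(\lambda+\mu)|{\rm div}\,u|^2]$, using $Tu=\mu\partial_n u+(\lambda+\mu)({\rm div}\,u)\,n$ together with ${\rm div}\,u=n\cdot\partial_n u$. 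Consequently the $S$-density collapses to $n_2[\mu|\partial_n u|^2+(\lambda+\mu)|{\rm div}\,u|^2]$. Writing $\partial_n u$ in the orthonormal frame $(n,\tau)$ one verifies the pointwise identity $|\partial_n u|^2=|{\rm div}\,u|^2+|{\rm curl}\,u|^2$ on $S$, which converts the density into $n_2\big[(\lambda+2\mu)|{\rm div}\,u|^2+\mu|{\rm curl}\,u|^2\big]$.

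Rearranging the identity and noting that $n_2=-1/\sqrt{1+f'^2}<0$ on $S$ with $|f'|\le L$, so that $|n_2|\ge (1+L^2)^{-1/2}$, and bounding the coefficients below by $\mu$, I would arrive at
\[
\frac{\mu}{\sqrt{1+L^2}}\big(\|{\rm div}\,u\|_{L^2(S)}^2+\|{\rm curl}\,u\|_{L^2(S)}^2\big)\le -2\Re\int_{D_h}g\cdot\partial_2\bar u\,\mathrm{d}x+\int_{\Gamma_h}\big[2\Re(\mathcal{T}u\cdot\partial_2\bar u)-\mathcal{E}(u,\bar u)+\omega^2|u|^2\big]\,\mathrm{d}s.
\]
The first term is bounded by $2\|g\|_{L^2(D_h)^2}\|\partial_2u\|_{L^2(D_h)^2}$ via Cauchy--Schwarz, and the second by $2k_s\|g\|_{L^2(D_h)^2}\|u\|_{L^2(D_h)^2}$ via the preceding lemma. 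To convert the factor $\|u\|$ into $\|\partial_2 u\|$ and produce the height dependence, I invoke the Poincar\'e inequality $\|u\|_{L^2(D_h)^2}\le C(h-m)\|\partial_2 u\|_{L^2(D_h)^2}$, valid because $u=0$ on $S$ and $D_h$ has $x_2$-extent at most $h-m$. Using $k_s=\omega/\sqrt\mu\le C\omega$ and absorbing the fixed Lam\'e constants into $C$ then yields the stated bound with $C_1(L,\omega,h)=C(1+L^2)^{1/2}(\omega(h-m)+1)$.

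The main obstacle is the surface computation: correctly reducing $\mathcal{E}(u,\bar u)$ and $2\Re(Tu\cdot\partial_2\bar u)$ on $S$ under $u=0$ and establishing $|\partial_n u|^2=|{\rm div}\,u|^2+|{\rm curl}\,u|^2$, which is exactly what makes the left-hand side of the target inequality emerge with the right coefficients. A secondary technical point is justifying the Rellich identity itself: the double integration by parts requires $u\in H^2$ near $S$ and on $\Gamma_h$, which is precisely why the hypotheses $g\in H^1(D)^2$ and $f\in C^2(\mathbb{R})$ are imposed, and over the unbounded strip $D_h$ the identity must be obtained by a truncation-and-limit argument in $x_1$ relying on the decay encoded in the upward radiation condition.
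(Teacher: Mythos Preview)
Your proposal is correct and follows essentially the same route as the paper: the Rellich identity with multiplier $\partial_2\bar u$, the reduction of the $S$-density via $u=0$ and $\partial_\tau u=0$, the $\Gamma_h$ bound from Lemma~3.2, the normal-component estimate $-n_2\ge(1+L^2)^{-1/2}$, and the Poincar\'e inequality. The only minor difference is in justifying the integration by parts on the unbounded strip: the paper argues by density of $C_0^\infty(D_h\cup\Gamma_h\cup S)^2$ in $H^2(D_h)^2$ (which is where the $H^2$ regularity from $g\in H^1$, $f\in C^2$ enters) rather than by invoking decay from the UPRC, which strictly speaking holds only above $\Gamma_h$.
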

\begin{proof}
    Since $g\in H^1(D)^2$ and $f\in C^2(\mathbb{R})$, by standard elliptic regularity (see \cite{r20}) we have $u \in H^2(D_h)^2$.
    So multiplying the Navier equations by $\partial_2 \bar{u}$ and integration by parts gives
    \begin{equation} \label{eq3.4}
        2\Re \int_{D_h}\partial_2 \bar{u}\cdot (\Delta^*+\omega^2)u \,\text{d}x=\left(\int_{\Gamma_h}+\int_{S}\right)\{2\Re (Tu\cdot \partial_2 \bar{u})-n_2\mathcal{E}(u,\bar{u})+n_2\omega^2|u|^2\}\,\text{d}s,
    \end{equation} where $n=(n_1,n_2)^\top$ is the unit outward normal vector on $S$.
    In fact, since $D_h$ is an unbounded domain, direct integration by parts is not allowed. Noting $C^\infty_0(D_h \cup \Gamma_h \cup S)^2$ is dense in $H^2(D_h)^2$, we have a sequence $\{u_n\} \subset C^\infty_0(D_h \cup \Gamma_h \cup S)^2$ such that \[
u_n \to u,\quad \text{in}\quad H^2(D_h)^2.
    \]So we firstly use integration by parts to give \eqref{eq3.4} for $u_n$ and then take limits to give the conclusion for $u$.
    
    Note $u=0$ on $S$, which implies $\partial_\tau u=n_1\partial_2 u-n_2\partial_1 u =0$. Inserting it to \eqref{eq3.4} gives \begin{align} \label{eq3.5}
-&\int_{S}\{n_2\mu|\partial_n u|^2+n_2(\lambda+\mu)|\nabla\cdot u|^2\}\,\text{d}s \notag \\
&=\int_{\Gamma_h}\{ 2\Re (Tu\cdot \partial_2\bar{u})-\mathcal{E}(u,\bar{u})+\omega^2|u|^2\}-2\Re \int_{D_h} g\cdot \partial_2\bar{u} \,\text{d}x.
    \end{align}
By Lemma 3.2, it is easy to obtain
\begin{align} \label{eq3.6}
    -&\int_{S}\{n_2\mu|\partial_n u|^2+n_2(\lambda+\mu)|\nabla\cdot u|^2\}\,\text{d}s\notag \\ &\le 2k_s \Im \int_{D_h}g\cdot \bar{u} \,\text{d}x-2\Re\int_{D_h}g\cdot \partial_2\bar{u} \,\text{d}x.
\end{align}
Since
\begin{equation} \label{eq3.7}
    n_2=-(1+f')^{-1/2}\le -(1+L^2)^{-1/2}, 
\end{equation}
combining \eqref{eq3.5}-\eqref{eq3.7} gives \begin{equation} \label{eq3.8}
    \|\text{div}\, u\|^2_{L^2(S)}+\|\partial_n u\|^2_{L^2(S)} \le 2(1+L^2)^{1/2} \left(k_s \Im \int_{D_h}g\cdot \bar{u} \,\text{d}x-\Re\int_{D_h}g\cdot \partial_2\bar{u} \,\text{d}x\right).
\end{equation}
By the Poincar$\acute{\rm e}$ inequality (see Lemma 3.4 in \cite{r7})
\begin{equation} \label{eq3.9}
\|u\|_{L^2(D_h)^2} \le (h-m)/\sqrt{2} \|\partial_2 u\|_{L^2(D_h)^2},
\end{equation} we get \begin{align} \label{eq3.10}
    k_s \Im \int_{D_h}g\cdot \bar{u} \,\text{d}x&-\Re\int_{D_h}g\cdot \partial_2 \bar{u} \,\text{d}x \notag \\ &\le C(\omega(h-m)+1)\|g\|_{L^2(D_h)^2}\|\partial_2 u\|_{L^2(D_h)^2}.
\end{align}
By \eqref{eq3.8}-\eqref{eq3.10}, \begin{equation*}
     \|\text{div} u\|^2_{L^2(S)}+\|\partial_n u\|^2_{L^2(S)} \le C_1(L,\omega,h)\|g\|_{L^2(D_h)^2}\|\partial_2 u\|_{L^2(D_h)^2}
\end{equation*}
with \[
C_1(L,\omega,h)=C (1+L^2)^{1/2}(\omega(h-m)+1).
\]
 Note $|\text{curl} u|^2=|\nabla u|^2-|\text{div}u|^2$, which completes the proof.
\end{proof}
Next it needs to estimate $\|\text{div}\,u\|_{L^2(D_h)}$ and $\|\text{curl}\,u\|_{L^2(\Gamma_h)}$. This is based on the \textit{a priori} bound for the Helmholtz equation in \cite{r7}. Set $H=h+1$ and extend the problem to $D_H$. Still denote the zero extension of $g$ in $D_H$ by $g$. The function $u$ can be extended to $D_H$ by \eqref{eq2.6} and we still denote the extension by $u$. In fact, we do not estimate $\|\text{div}\,u\|_{L^2(D_h)}$ and $\|\text{curl}\,u\|_{L^2(\Gamma_h)}$ but estimate $\|\text{div}\,u\|_{L^2(D_H)}$ and $\|\text{curl}\,u\|_{L^2(\Gamma_H)}$. The reason lies in the proof of Lemma 3.4. Recalling the Helmholtz decomposition \eqref{eq2.1}-\eqref{eq2.3}, $\phi$ and $\psi$ defined by \eqref{eq2.2} can also be extended to $D_H$. They both satisfy the Helmholtz equations\begin{equation} \label{eq3.11}
    \Delta w+k^2 w=g_0, \quad \text{in}\quad D_H
\end{equation}
with \[
k=k_s, g_0=-i/\omega^2 \text{div} {g}\quad \text{in}\quad D_H\quad \text{for}\quad w=\phi
\] and\[
k=k_p, g_0=-i/\omega^2 \text{curl} {g}\quad \text{in}\quad D_H\quad \text{for}\quad w=\psi.
\]
And it is easy to check they both satisfy (see \cite{r7}) the UPRC for the Helmholtz equation
\begin{equation} \label{eq3.12}
    w=\frac{1}{\sqrt{2\pi}}\int_{\mathbb{R}}\exp{i\sqrt{k^2-\xi^2}(x_2-H)+ix_1\xi}\hat{w}(\xi,H)\,\text{d}\xi,\quad x_2>H.
\end{equation}
It implies that (see \cite{r7}) $w$ satisfies TBC\begin{equation}\label{eq3.13}
    \partial_n w= \tilde{\mathcal{T}} w, \quad \text{on} \quad \Gamma_H,
\end{equation}
where $\tilde{\mathcal{T}}$ is the DtN operator from $H^{1/2}(\Gamma_H)$ to $H^{-1/2}(\Gamma_H)$ defined by\[
\tilde{\mathcal{T}} v =\mathcal{F}^{-1}(i\sqrt{k^2-\xi^2}\hat{v}), \quad v\in H^{1/2}(\Gamma_H).
\]
By Lemma 3.3, $\|w\|_{L^2(S)}$ can be estimated for $w=\phi$ or $w=\psi$. Hence it suffices to estimate $\|w\|_{L^2(D_H)}$ by $\|g_0\|_{L^2(D_H)}$ and $\|w\|_{L^2(S)}$.
  To this end, we construct a Dirichlet boundary value problem for the Helmholtz equation with inhomogeneous term  to estimate  $\|\partial_n w\|_{L^2(S)}$ by $\|g_0\|_{L^2(D_H)}$ and $\|w\|_{L^2(S)}$ and use the second Green's formula to estimate $\|w\|_{L^2(D_H)}$ by $\|\partial_n w\|_{L^2(S)}$. The stability result for the Helmholtz equation in \cite{r7} is used in the proof.
\begin{Le}
    The function $ w \in H^1(D_H) $ is assumed to satisfy \eqref{eq3.11} and \eqref{eq3.13}. Then the inequality\begin{equation*}
        \|w\|_{L^2(\Gamma_H)} \le  \|w\|_{L^2(
        D_H)} \le \tilde{C}_2(L,k,h) \|w\|_{L^2(S)}+ \tilde{C}_3(k,h) \|g_0\|_{L^2(D_H)} 
    \end{equation*} holds with \[
\tilde{C}_2(L,k,h)=C(1+L^2)^{1/4}\sqrt{H-m}(1+k(H-m))
    \]and\[
 \tilde{C}_3(k,h)=C(H-m)(1+k(H-m))^2/k
    .\]
\end{Le}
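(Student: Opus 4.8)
The plan is to establish the two inequalities by different means, obtaining the left one as an elementary consequence of the radiation condition and the right one by a duality argument resting on the scalar stability bound of \cite{r7}, which realizes the ``Dirichlet problem plus second Green's formula'' strategy announced before the lemma. For $\|w\|_{L^2(\Gamma_H)}\le\|w\|_{L^2(D_H)}$ I would use that $g_0$ is supported in $D_h$, so that $w$ solves the homogeneous Helmholtz equation and obeys the radiation condition \eqref{eq3.12} throughout the full strip $\Pi:=\mathbb{R}\times(h,H)\subset D_H$, whose height is exactly $H-h=1$ by the choice $H=h+1$. Fourier transforming in $x_1$ gives $\hat w(\xi,t)=\exp(i\sqrt{k^2-\xi^2}\,(t-H))\hat w(\xi,H)$ for every $t\in(h,H)$; since $\sqrt{k^2-\xi^2}$ is real when $|\xi|<k$ and equals $i\sqrt{\xi^2-k^2}$ when $|\xi|>k$, the exponential has modulus $\ge1$ for $t<H$, whence $|\hat w(\xi,t)|\ge|\hat w(\xi,H)|$ for all $\xi$. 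Parseval then yields $\|w(\cdot,t)\|_{L^2(\mathbb{R})}\ge\|w\|_{L^2(\Gamma_H)}$, and integrating over the unit-width slab $t\in(h,H)$ gives $\|w\|_{L^2(\Gamma_H)}^2\le\|w\|_{L^2(\Pi)}^2\le\|w\|_{L^2(D_H)}^2$. This is exactly where the normalization $H=h+1$ is exploited, as anticipated in the text.

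For the second inequality I would argue by duality. Fix $\rho\in L^2(D_H)$ and let $\chi$ be defined by requiring $\bar\chi$ to solve $\Delta\bar\chi+k^2\bar\chi=\bar\rho$ in $D_H$, $\bar\chi=0$ on $S$, with the radiation condition \eqref{eq3.12} on $\Gamma_H$. The scalar theory of \cite{r7} supplies two bounds for this auxiliary Dirichlet problem: the a priori estimate $\|\chi\|_{L^2(D_H)}\le C(k,h)\|\rho\|_{L^2(D_H)}$ and, crucially, a Neumann-trace estimate $\|\partial_n\chi\|_{L^2(S)}\le C(L,k,h)\|\rho\|_{L^2(D_H)}$. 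The latter is where a Dirichlet boundary value problem is genuinely used: because $\bar\chi$ vanishes on $S$ its tangential derivative does too, so the scalar Rellich identity applied to $\bar\chi$, obtained exactly as in Lemma 3.2, retains on $S$ only the term $\int_S(-n_2)|\partial_n\bar\chi|^2\,\mathrm{d}s$. Using $-n_2\ge(1+L^2)^{-1/2}$ from \eqref{eq3.7}, absorbing the $\Gamma_H$ contribution through the transparent boundary condition, and controlling the volume term by $\|\rho\|_{L^2(D_H)}$ and $\|\partial_2\bar\chi\|_{L^2(D_H)}$ (with a Poincar\'e factor as in \eqref{eq3.9}) produces the trace estimate with its $(1+L^2)^{1/4}$ and $\sqrt{H-m}$ factors.

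With these bounds in hand I would apply Green's second identity to $w$ and $\bar\chi$ over $D_H$. The $k^2$ terms cancel, the $\Gamma_H$ boundary terms cancel because both $w$ and $\bar\chi$ satisfy \eqref{eq3.12}, so that $\int_{\Gamma_H}(w\,\partial_n\bar\chi-\bar\chi\,\partial_n w)\,\mathrm{d}s=0$ by Parseval, and on $S$ only $\int_S w\,\partial_n\bar\chi\,\mathrm{d}s$ survives since $\bar\chi=0$ there; this leaves $\int_{D_H}w\bar\rho\,\mathrm{d}x=\int_{D_H}g_0\bar\chi\,\mathrm{d}x+\int_S w\,\partial_n\bar\chi\,\mathrm{d}s$. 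Bounding the right-hand side by Cauchy--Schwarz and inserting the two estimates on $\chi$ gives $|\int_{D_H}w\bar\rho\,\mathrm{d}x|\le(\tilde C_3(k,h)\|g_0\|_{L^2(D_H)}+\tilde C_2(L,k,h)\|w\|_{L^2(S)})\|\rho\|_{L^2(D_H)}$; taking the supremum over $\|\rho\|_{L^2(D_H)}=1$ yields the claimed bound. Note that the $L^2$-stability constant, which carries no $L$-dependence, supplies $\tilde C_3$, whereas the Neumann-trace constant, carrying the $(1+L^2)^{1/4}$, supplies $\tilde C_2$, consistent with the stated forms.

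I expect the Neumann-trace estimate on the rough surface to be the crux. A direct Rellich argument on $w$ itself fails, because $w$ does not vanish on $S$ and the identity then generates a tangential term $\partial_\tau w$ there, which Lemma 3.3 cannot control: it delivers $\|w\|_{L^2(S)}$ but not $\|w\|_{H^1(S)}$. Routing the trace bound instead through the auxiliary problem with homogeneous Dirichlet data, where that term vanishes identically, is precisely what makes the estimate close; the residual difficulty is then purely bookkeeping, namely propagating the explicit $k$- and $h$-dependence of the \cite{r7} stability constants so as to recover the exact forms of $\tilde C_2$ and $\tilde C_3$.
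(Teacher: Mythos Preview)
Your proposal is correct and follows essentially the same route as the paper. The paper constructs the auxiliary Dirichlet problem with right-hand side $\bar w$ directly (your $\rho=w$), applies the Rellich identity to the auxiliary solution $v$ to bound $\|\partial_n v\|_{L^2(S)}$ via the \cite{r7} stability estimate, and then uses Green's second formula exactly as you describe; your general-$\rho$ duality framing is equivalent, since the supremum is attained at $\rho=w$. The left inequality is also handled identically, by monotonicity of $\|w\|_{L^2(\Gamma_c)}$ in $c$ under the radiation condition and integration over the unit slab $(h,H)$.
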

\begin{proof}
     Consider the boundary value problem\begin{align}\label{eq3.14}
        \Delta v+ k^2 v= \bar{w}\quad \text{in} \quad D_H, \\ \label{eq3.15}
        v=0 \quad \text{on~~} S,\quad  \\ \label{eq3.16}
        \partial_n v= \tilde{\mathcal{T}} v \quad\text{on}\quad \Gamma_H.
    \end{align}
    By the Theorem 4.1 in \cite{r7}, the inequality
    \begin{equation} \label{eq3.17}
        \|\nabla v\|_{L^2(D_H)}+k\|v\|_{L^2(D_H)} \le C (1+k(H-m))^2(H-m) \|w\|_{L^2(D_H)}
    \end{equation} holds.
    Furthermore, the Rellich identity for the Helmholtz equation gives (see \cite{r7})\begin{equation}\label{eq3.18}
        2\Re \int_{D_H} \partial_2\bar{v} \bar{w} \,\text{d}x =\left(\int_{\Gamma_H}+\int_{S}\right)\{2\Re (\partial_n v \partial_2 \bar{v})-n_2|\nabla v|^2+n_2k^2|v|^2\}\,\text{d}s.
    \end{equation}
    Moreover, the Lemma 2.2 in \cite{r7} yields
    \begin{equation}\label{eq3.19}
        \int_{\Gamma_H} 2\Re (\partial_n v\partial_2 \bar{v})-n_2 |\nabla v|^2 +n_2k^2|v|^2 \,\text{d}s \le 2k \Im\int_{D_H}\bar{v} \bar{w}\,\text{d}x.
    \end{equation}
    By $w=0$ on $S$, we have $\partial_\tau w=0$ on $S$. It turns out that \begin{align}\label{eq3.20}
       -\int_{S} \{2\Re (\partial_n v \partial_2 \bar{v})-n_2|\nabla v|^2+n_2k^2|v|^2\}\,\text{d}s&=-\int_{S} n_2|\partial_n v|^2 ds\notag\\ &\ge (1+L^2)^{-1/2} \| \partial_n v\|^2_{L^2(S)}.
    \end{align}
    Combining \eqref{eq3.17}-\eqref{eq3.20}, the inequality\begin{align}\label{eq3.21}
        \|\partial_n v\|^2_{L^2(S)} &\le (1+L^2)^{1/2}\left(2k\Im \int_{D_H}\bar{v} \bar{w}\,\text{d}x- 2\Re \int_{D_H} \partial_2\bar{v} \bar{w} \,\text{d}x\right)\notag\\
        &\le2(1+L^2)^{1/2} \|w\|_{L^2(D_H)}(k\|v\|_{L^2(D_H)}+\|\nabla v\|_{L^2(D_H)})\notag \\ &\le C(1+L^2)^{1/2}(H-m)(1+k(H-m))^2\|w\|^2_{L^2(D_H)} 
    \end{align} holds.
    By the second Green's formula, we have \begin{equation}\label{eq3.22}
        \int_{D_H} w\Delta v-v \Delta w\,\text{d}x =\left(\int_{\Gamma_H}+\int_{S}\right)\{w \partial_n v- v\partial_n w \}\,\text{d}s.
    \end{equation}
    Similarly as \eqref{eq3.4}, the second Green's formula can not be directly applied because the domain $D_H$ is unbounded. Noting that \[v \in H^1_S(D_H)=\{u \in H^1(D_H): u=0 \,\text{on} \,S \,\text{in trace sense}\},\]
     we have a sequence $\{v_n\} \subset C^\infty_0(D_H \cup \Gamma_H)$ such that \[
v_n \to v,  \quad \text{in}\quad H^1_S(D_H).
    \]  Applying the second Green's formula to $w_n$ and $v_n$ and taking the limit give that the second Green's formula holds for $w$ and $v$.
   Combining equations \eqref{eq3.11}, \eqref{eq3.14}, boundary condition \eqref{eq3.13}, \eqref{eq3.15}-\eqref{eq3.16}, and \eqref{eq3.22} yields \begin{equation}\label{eq3.23}
\int_{D_H} |w|^2 \,\text{d} x=\int_{D_H}w(\Delta v+k^2 v) \,\text{d}x= \int_{D_H} v g_0 \,\text{d} x+ \int_{S} w \partial_n v \,\text{d}s.
    \end{equation}
    Combining \eqref{eq3.17}, \eqref{eq3.21} and \eqref{eq3.23} yields \begin{align*}
        \|w\|^2_{L^2(D_H)} \le & \|v\|_{L^2(D_H)}\| g_0\|_{L^2(D_H)}+\|w\|_{L^2(S)}\|\partial_n v\|_{L^2(S)} \notag \\  \le & C\sqrt{H-m}(1+L^2)^{1/4} (1+k(H-m))\|w\|_{L^2(D_H)}\|w\|_{L^2(S)}  \notag \\ &+ C(H-m)\frac{(1+k(H-m))^2}{k}\|w\|_{L^2(D_H)}\|g_0\|_{L^2(D_H)}.
    \end{align*}
    This completes the right inequality in Lemma 3.4.
    To estimate $\|w\|_{L^2(\Gamma_H)}$, we use the fact that the UPRC (3.12) holds for all $c \in (h,H]$ (see \cite{r7}), which implies that \begin{equation*}
        \|w\|_{L^2(\Gamma_H)} \le \|\hat{w}\|_{L^2(\Gamma_c)}=\|w\|_{L^2(\Gamma_c)}, \quad  \text{ for }\, h<c\le H.
    \end{equation*}
    Integration with respective to $x_2$ gives \begin{equation*} 
    (H-h)\|w\|^2_{L^2(\Gamma_H)} \le \|w\|^2_{L^2(D_H \backslash D_h)} \le \|w\|^2_{L^2(D_H)},
    \end{equation*} which
    completes the proof.
\end{proof}
Applying this lemma to $w=\phi$ and $\psi$ yields  \begin{align}\label{eq3.24}
    \| \text{div}\, u\|^2_{L^2(\Gamma_H)}+\| \text{curl}\, u\|^2_{L^2(\Gamma_H)} \le 
      \| \text{div}\, u\|^2_{L^2(D_H)}+\| \text{curl}\, u\|^2_{L^2(D_H)} \notag \\ \le C_2(\omega, h, L)^2 (\| \text{div}\, u\|^2_{L^2(S)}+\| \text{curl}\, u\|^2_{L^2(S)}) + C_3(\omega,h)^2 \|g\|^2_{H^1(D_h)},
\end{align}
where\[
 C_2(\omega, h, L)=C(1+L^2)^{1/4}\sqrt{H-m}(1+\omega (H-m))
\]
and \[
C_3(\omega ,h)=C(H-m)(1+\omega (H-m))^2/\omega.
\]
Together with \eqref{eq3.24} and Lemma 3.3 gives \begin{align} \label{eq3.25}
    &\| \text{div}\, u\|^2_{L^2(\Gamma_H)}+\| \text{curl}\, u\|^2_{L^2(\Gamma_H)} \notag \\ &\le  C_2(\omega, h, L)^2C_1(\omega, h, L)\|g\|_{H^1(D_h)}\|\partial_2 u\|_{L^2(D_H)} + C_3(\omega, h)^2 \|g\|^2_{H^1(D_h)}
\end{align}
and \begin{align}\label{eq3.26}
    &\| \text{div}\, u\|^2_{L^2(D_H)}+\| \text{curl}\, u\|^2_{L^2(D_H)} \notag \\ &\le  C_2(\omega, h, L)^2C_1(\omega, h, L)\|g\|_{H^1(D_h)}\|\partial_2 u\|_{L^2(D_H)} + C_3(\omega, h)^2 \|g\|^2_{H^1(D_h)}.
\end{align}
Now it proceeds to estimate $\|\nabla u\|_{L^2(D_h)}^2$ by another Relliich identity for Navier equations, which indicates the following \textit{a priori} bound.
\begin{Th} Suppose that $g \in H^1(D)^2$ and $u \in V_h$ is a solution to Variation problem 1. Then the inequality\begin{equation*}
    \|u\|_{H^1(D_h)} \le (h-m+2) (C_4(\omega, h)+C_5(\omega,h)+C_6(\omega,h,L))\|g\|_{H^1(D_h)}
    \end{equation*}
    holds with \[
C_4(\omega,h)= C(h+1-m)\omega,\quad
C_5=C\sqrt{1+\omega^{-1}}C_3(\omega,h)
    \]
    and\[
C_6=C(\omega^{-1}+1)C_1(\omega,h,L)
C_2(\omega,h,L)^2.
    \]
\end{Th}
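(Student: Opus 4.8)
The plan is to upgrade the control of $\text{div}\,u$ and $\text{curl}\,u$ obtained in \eqref{eq3.24}--\eqref{eq3.26} and Lemma 3.3 into control of the full gradient $\nabla u$, and then to recover the $H^1$-norm by the Poincar\'e inequality \eqref{eq3.9}. Because the form $B$ is not coercive (the indefinite term $-\omega^2\|u\|^2$), a naive energy estimate for $\|\nabla u\|$ is hopeless; the argument must instead route through the Helmholtz potentials $\phi,\psi$, for which the frequency-explicit bounds are exactly those already derived. Throughout I would work on the enlarged strip $D_H$, where the extension of $u$ by \eqref{eq2.6} still solves the homogeneous Navier equation above $\Gamma_h$ (since $\mathrm{supp}\,g\subset D_h$), vanishes on $S$, and satisfies the transparent boundary condition on $\Gamma_H$; at the end $\|\nabla u\|_{L^2(D_h)}\le\|\nabla u\|_{L^2(D_H)}$ transfers the bound back to $D_h$.

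The Rellich-type identity I would use is the two-dimensional pointwise relation
\begin{equation*}
|\nabla u|^2=|\text{div}\,u|^2+|\text{curl}\,u|^2-2\Re\big(\partial_1u_1\,\overline{\partial_2u_2}-\partial_1u_2\,\overline{\partial_2u_1}\big).
\end{equation*}
Integrating over $D_H$ and integrating the cross term by parts (legitimately, since $u\in H^2(D_H)^2$, via the same density argument used for \eqref{eq3.4}), its interior contribution is purely imaginary and is annihilated by $\Re$, while the boundary contribution on $S$ vanishes because $u=0$ there. What survives is
\begin{equation*}
\|\nabla u\|_{L^2(D_H)}^2=\|\text{div}\,u\|_{L^2(D_H)}^2+\|\text{curl}\,u\|_{L^2(D_H)}^2+2\Re\int_{\Gamma_H}(\partial_1u_2)\,\overline{u_1}\,\mathrm{d}s .
\end{equation*}
The two interior terms are bounded by \eqref{eq3.26}; its right-hand side carries the factor $\|\partial_2u\|_{L^2(D_H)}\le\|\nabla u\|_{L^2(D_H)}$, so a Young inequality with small parameter absorbs a fraction of $\|\nabla u\|_{L^2(D_H)}^2$ into the left-hand side and leaves the quadratic constants $C_3^2$ (the source of $C_5$) and $(C_2^2C_1)^2$ (the source of $C_6$). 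This absorption is harmless because the coefficient of $\|\nabla u\|^2$ on the left is exactly $1$.

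The delicate term is the boundary integral over $\Gamma_H$, and this is where $C_4=C(H-m)\omega$ is produced. On $\Gamma_H$ the operator $\partial_1$ is tangential, so both $u$ and $\partial_1u$ are given explicitly by the Fourier representation \eqref{eq2.6}; I would estimate $\big|\int_{\Gamma_H}(\partial_1u_2)\overline{u_1}\big|$ in frequency, using the symbol bounds of Lemma 3.1 together with a vertical trace/Poincar\'e estimate (which supplies the factor $H-m$) to extract exactly one power of $\omega$. Collecting the three estimates, solving the resulting quadratic inequality for $\|\nabla u\|_{L^2(D_H)}$, and then combining with \eqref{eq3.9} to bound $\|u\|_{L^2(D_h)}$ by $(h-m)\|\partial_2u\|_{L^2(D_h)}$ gives the asserted bound with the prefactor $(h-m+2)$.

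I expect the main obstacle to be precisely the frequency-explicit treatment of the $\Gamma_H$-boundary term: unlike the interior div/curl terms it couples different components of $u$ through a tangential derivative, so it cannot be read off from \eqref{eq3.24}--\eqref{eq3.26} and must be handled through the DtN symbol in a way that yields one clean power of $\omega$ and the correct power of $H-m$. A minor additional point is that Lemma 3.3 (hence \eqref{eq3.25}--\eqref{eq3.26}) was proved under $f\in C^2$, whereas the theorem only assumes $f$ Lipschitz; since every constant depends on the surface only through $L$, the final inequality extends to Lipschitz $f$ by approximating $f$ with $C^2$ graphs and passing to the limit.
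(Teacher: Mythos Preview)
Your pointwise identity and its integrated form
\[
\|\nabla u\|_{L^2(D_H)}^2=\|\mathrm{div}\,u\|_{L^2(D_H)}^2+\|\mathrm{curl}\,u\|_{L^2(D_H)}^2+2\Re\int_{\Gamma_H}(\partial_1 u_2)\,\overline{u_1}\,\mathrm{d}s
\]
are correct, and your plan to feed \eqref{eq3.26} into the interior div/curl terms and then absorb via Young is sound. But the approach founders exactly where you flag it: the $\Gamma_H$--boundary term. In Fourier this term is $\int_{\mathbb{R}} i\xi\,\hat u_2(\xi,H)\overline{\hat u_1(\xi,H)}\,\mathrm{d}\xi$, a \emph{tangential} first-order symbol acting on $u$; Lemma~3.1 concerns the DtN symbol $M(\xi)$ and says nothing about $i\xi$. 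If you write $\hat u_1=\xi P_H+\gamma_s S_H$, $\hat u_2=\gamma_p P_H-\xi S_H$ and compute the real part for $|\xi|>k_s$ (so $\gamma_p=i a$, $\gamma_s=i b$ with $a,b>0$), you get
\[
-a\xi^2|P_H|^2-b\xi^2|S_H|^2+(ab\xi+\xi^3)\,\Im(S_H\overline{P_H}),
\]
whose last term is of order $|\xi|^3|P_H||S_H|$ with indefinite sign. There is no high-frequency sign mechanism to discard it, so the integral is essentially an $H^{1/2}(\Gamma_H)$-norm of $u$, which by trace is comparable to $\|\nabla u\|_{L^2(D_H)}^2$ with an $O(1)$ constant --- precisely what you cannot absorb. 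This is not a technicality: your identity is the quantitative statement that $\|\mathrm{div}\,u\|^2+\|\mathrm{curl}\,u\|^2$ fails to control $\|\nabla u\|^2$ by exactly that boundary term, and without the DtN sign structure you cannot close the loop to produce $C_4=C(H-m)\omega$.

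The paper avoids this by a different mechanism. It multiplies the Navier system by the \emph{weighted} multiplier $(x_2-m)\partial_2\bar u$ (equation \eqref{eq3.27}); the resulting $\Gamma_H$--boundary term then involves $\mathcal{T}u\cdot\partial_2\bar u$ and $\mathcal{E}(u,\bar u)$, to which Lemma~3.2 and the sign property Lemma~3.1(ii) apply, leaving only the low-frequency piece $\Re\int_{|\xi|\le k_s}M\hat u\cdot\overline{\hat u}$ controlled by Lemma~3.1(iii) and \eqref{eq3.25}. On the interior side, the paper does not use a bare div--curl identity but invokes the algebraic inequality \eqref{eq3.31} (from \cite{r13}),
\[
\int_{D_H}2\Re\sum_j\mathcal{E}(u,(x_2-m)e_j)\partial_2\bar u_j\,\mathrm{d}x+C_1\big(\|\mathrm{div}\,u\|^2+\|\mathrm{curl}\,u\|^2\big)\ge C_2\|\nabla u\|^2,
\]
which is the elasticity-specific replacement for what you were trying to get from the pointwise relation. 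Your observation about approximating Lipschitz $f$ by $C^2$ graphs is correct and is exactly how the paper closes the argument.
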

\begin{proof}
Assume $f \in C^2(\mathbb{R})$. Multiplying the Navier equations by $(x_2-m) \partial_2\bar{u}$ and using integration by parts gives \begin{align}\label{eq3.27}
        2\Re &\int_{D_H} g\cdot (x_2-m) \partial_2\bar{u} \,\text{d}x \notag \\&=\int_{D_H}\mathcal{E}(u,\bar{u})-2 \Re \sum_{j=1}^{2} \mathcal{E}(u, (x_2-m)e_j) \partial_2 \bar{u_j} -\omega^2 |u|^2 \,\text{d}x\notag \\
        &+  \left(\int_{S}+\int_{\Gamma_H}\right)\{2\Re (\mathcal{T}u\cdot \partial_2\bar{u})-\mathcal{E}(u,\bar{u})+\omega^2|u|^2\}(x_2-m)\,\text{d}s.
    \end{align} 
    Taking $v=u$ in variation formula implies \[
\int_{D_H} \mathcal{E}(u,\bar{u})-\omega^2|u|^2\,\text{d}x-\int_{\mathbb{R}} M(\xi)\hat{u}(\xi,H) \cdot \bar{\hat{u}}(\xi,H) \,\text{d}s=-\int_{D_H} g\cdot \bar{u}\,\text{d}x.
   \]
   Taking the real part and using lemma 3.1 gives \begin{align}\label{eq3.28}
   \int_{D_H} \mathcal{E}(u,\bar{u})-\omega^2|u|^2\,\text{d}x= \Re\int_{\mathbb{R}} M(\xi)\hat{u}(\xi,H) \cdot \bar{\hat{u}}(\xi,H) \,\text{d}\xi -\Re \int_{D_H} g\cdot \bar{u}\,\text{d}x\notag \\ \le -\Re \int_{D_H} g\cdot \bar{u}\,\text{d}x+\Re\int_{|\xi|\le k_s} M(\xi)\hat{u}(\xi,H) \cdot \bar{\hat{u}}(\xi,H) \,\text{d}\xi.
   \end{align} 
   Recalling $u=0$ and $\partial_\tau u=0$ on $S$ means \begin{align}\label{eq3.29}
&\int_{S}2\{\Re (\mathcal{T}u\cdot \partial_2\bar{u})-\mathcal{E}(u,\bar{u})+\omega^2|u|^2\}(x_2-m)\,\text{d}s\notag \\&=\int_{S}n_2(x_2-m)(\mu |\partial_n u|^2+(\lambda+\mu) |\text{div} u|^2 ) \le 0.
   \end{align}
   Combining \eqref{eq3.27}-\eqref{eq3.29} gives
   \begin{align}\label{eq3.30}
      &\int_{D_H} 2 \Re \sum_{j=1}^{2} \mathcal{E}(u, (x_2-m)e_j) \partial_2 \bar{u}_j \,\text{d}x \notag \\ &\le \int_{D_H} -g \cdot u-2\Re(g \cdot \partial_2 \bar{u})(x_2-m) \,\text{d}x+\Re\int_{|\xi|\le k_s} M(\xi)\hat{u}(\xi,H) \cdot \bar{\hat{u}}(\xi,H) \,\text{d}\xi\notag \\ &+(H-m)\int_{\Gamma_H} 2\Re (\mathcal{T}u\cdot \partial_2\bar{u})-\mathcal{E}(u,\bar{u})+\omega^2|u|^2\,\text{d}s.
   \end{align}
   Consider the left term first. There exist (see \cite{r13}) constants $C_1,C_2>0$ both independent on $\omega,h$ and $L$ such that \begin{align}\label{eq3.31}
       \int_{D_H} 2 \Re \sum_{j=1}^{2} \mathcal{E}(u, (x_2-m)e_j) \partial_2 \bar{u}_j \,\text{d}x+C_1 (\|\text{div}\,u\|^2_{L^2(D_H)}+\|\text{curl}\,u\|^2_{L^2(D_H)})\notag \\ \ge C_2 \|\nabla u\|^2_{L^2(D_H)^2}.
   \end{align}
   Then we estimate the three parts of the right term in \eqref{eq3.30} respectively. It is easy to see \begin{align}\label{eq3.32}
       \int_{D_H} -g \cdot u-2\Re(g \cdot \partial_2 \bar{u})(x_2-m) \,\text{d}x &\le \|g\|_{L^2(D_H)^2}\|u\|_{L^2(D_H)^2}\notag \\ &+2(H-m)\|g\|_{L^2(D_H)^2}\|\partial_2 u\|_{L^2(D_H)^2}.
   \end{align}
   Inserting the Poincar$\acute{\rm e}$ inequality \eqref{eq3.9} into \eqref{eq3.32},
   \begin{align}\label{eq3.33}
     \int_{D_H} -g \cdot u-2\Re(g \cdot \partial_2 \bar{u})(x_2-m) \,\text{d}x \le  C (H-m)\|g\|_{L^2(D_H)^2}\|\partial_2 u\|_{L^2(D_H)^2}.
   \end{align}
   By Lemma 3.2 and the Poincar$\acute{\rm e}$ inequality \eqref{eq3.9}, \begin{align}
   \label{eq3.34}\int_{\Gamma_H} 2\Re (\mathcal{T}u\cdot \partial_2\bar{u})-\mathcal{E}(u,\bar{u})+\omega^2|u|^2\,\text{d}s &\le 2k_s \|g\|_{L^2(D_H)^2}\|u\|_{L^2(D_H)^2}\notag \\ &\le C(H-m)k_s\|g\|_{L^2(D_H)^2}\|\partial_2 u\|_{L^2(D_H)^2}.
   \end{align}
   So the only difficulty is to estimate the second part of the right term. By \eqref{eq2.4}, Lemma 3.1 and the Plancherel identity,
   \begin{align}\label{eq3.35}
       &\Re\int_{|\xi|\le k_s} M(\xi)\hat{u}(\xi,H) \cdot \bar{\hat{u}}(\xi,H) \,\text{d}\xi \le C \int_{|\xi|\le k_s} \|M\| |\hat{u}(\xi,H)|^2 \,\text{d}\xi \notag \\ &\le C\omega k_s^2\int_{|\xi|\le k_s} (|P_H|^2+|S_H|^2) \,\text{d} \xi = C \omega k_s^2(\|\phi\|^2_{L^2(\Gamma_H)}+\|\psi\|^2_{L^2(\Gamma_H)}).
   \end{align}
 By \eqref{eq2.2} and \eqref{eq3.25}, \begin{align}   \label{eq3.36}&\|\phi\|^2_{L^2(\Gamma_H)}+\|\psi\|^2_{L^2(\Gamma_H)} \notag \\ &\le \frac{1}{k_p^4}(C_2(\omega, h, L)^2C_1(\omega, h, L)\|g\|_{H^1(D_h)^2}\|\partial_2 u\|_{L^2(D_H)^2} + C_3(\omega, h)^2 \|g\|^2_{H^1(D_h)^2}).
 \end{align}
 Inserting \eqref{eq3.36} into \eqref{eq3.35} gives \begin{align}\label{eq3.37}
     &\Re\int_{|\xi|\le k_s} M(\xi)\hat{u}(\xi,H) \cdot \bar{\hat{u}}(\xi,H) \,\text{d}\xi \le C \omega^{-1} \notag \\ &\left(C_2(\omega, h, L)^2C_1(\omega, h, L)\|g\|_{H^1(D_h)^2}\|\partial_2 u\|_{L^2(D_H)^2} + C_3(\omega, h)^2 \|g\|^2_{H^1(D_h)^2}\right).
 \end{align}
 Combining \eqref{eq3.26}, \eqref{eq3.30}-\eqref{eq3.31}, \eqref{eq3.33}-\eqref{eq3.34} and \eqref{eq3.37} implies \begin{align*}
     &\|\nabla u\|^2_{L^2(D_H)^2} \le C(H-m)\omega\|g\|_{L^2(D_H)^2}\|\partial_2 u\|_{L^2(D_H)^2} + C (\omega^{-1}+1) \notag \\ & \left(C_2(\omega, h, L)^2C_1(\omega, h, L)\|g\|_{H^1(D_h)^2}\|\partial_2 u\|_{L^2(D_H)^2} + C_3(\omega, h)^2 \|g\|^2_{H^1(D_h)^2}\right).
 \end{align*}
 It is easy to see \begin{equation*}
     C_4(\omega,h)\|g\|_{L^2(D_H)^2}\|\partial_2 u\|_{L^2(D_H)^2} \le C_4^2(\omega,h)^2 \|g\|^2_{L^2(D_H)^2} +\frac{1}{4}\|\partial_2 u\|^2_{L^2(D_H)^2}
 \end{equation*}
 and \begin{equation*}
     C_6(\omega,h)\|g\|_{L^2(D_H)^2}\|\partial_2 u\|_{L^2(D_H)^2} \le C_6^2(\omega,h)^2 \|g\|^2_{L^2(D_H)^2} +\frac{1}{4}\|\partial_2 u\|^2_{L^2(D_H)^2}.
 \end{equation*}
 It turns out that \[
\|\nabla u\|^2_{L^2(D_H)^2} \le ( C_4(\omega,h)^2 + C_5(\omega,h)^2+C_6(\omega,h,L)^2) \|g\|^2_{H^1(D_h)^2}.
 \]
 Recalling Poincar$\acute{\rm e}$ inequality \eqref{eq3.9} gives \begin{align*}
   \|u\|_{H^1(D_h)^2} &\le \|u\|_{H^1(D_H)^2} \notag \\ & \le (h-m+2) ( C_4(\omega,h) + C_5(\omega,h)+C_6(\omega,h,L)) \|g\|_{H^1(D_h)^2}.
 \end{align*}
 
  The conclusion for $f \in C^2(\mathbb{R})$ has been proven. Notice that the coefficient of $\|g\|_{H^1(D_h)}$ in \eqref{eq3.33} is an increasing function with respect to $L$. So it is allowed to extend this \textit{a priori} bound to any Lipschitz continuous $f$ by the method of approximating in \cite{r13}. This completes the proof.
\end{proof}
In practice, the frequency $\omega$ is usually assumed to be large. Hence, it is easy to verify when $\omega$ is very large, the stability result can be simplified to \[
\|u\|_{H^1(D_h)} \le C \omega^3 \|g\|_{H^1(D_h)},
\]where $C$ is independent on $\omega$.

The stability result directly implies uniqueness. In fact, it also implies existence by semi-Fredholm operator theory. Note that existence and uniqueness do not require $g \in H^1(D)^2$.
\begin{Th}
    Suppose that $g \in L^2(D)^2$, Variation problem 1 admits a unique solution $u \in V_h$.
\end{Th}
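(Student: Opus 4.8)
The plan is to prove the two assertions separately, with \emph{uniqueness} coming for free from the stability estimate already established. Indeed, if $u_1,u_2\in V_h$ both solve Variation problem 1 with the same datum $g\in L^2(D)^2$, then $w=u_1-u_2$ solves Variation problem 1 with right-hand side $0$. Since $0\in H^1(D)^2$, Theorem 3.1 applies to $w$ and gives $\|w\|_{H^1(D_h)}\le C\cdot\|0\|_{H^1(D_h)}=0$, hence $u_1=u_2$. The point to stress is that this argument only ever feeds the \emph{zero} datum into Theorem 3.1, which is precisely why the hypothesis $g\in H^1$ there is not needed for the uniqueness half.

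For \emph{existence} I would pass to the bounded linear operator $\mathcal{A}\colon V_h\to V_h^*$ induced by the sesquilinear form via $\langle\mathcal{A}u,v\rangle=B(u,v)$; continuity of $\mathcal{A}$ follows from Lemma 3.1(i) together with the Cauchy--Schwarz inequality, so that Variation problem 1 becomes the operator equation $\mathcal{A}u=F_g$, where $F_g(v)=-(g,v)_{D_h}$ belongs to $V_h^*$ for \emph{every} $g\in L^2(D)^2$ (this already removes the need for $g\in H^1$). The first substantive step is to record a G\aa rding inequality for $B$: splitting the Fourier multiplier $M$ of the DtN map at $|\xi|=k_s$, the range $|\xi|>k_s$ contributes a term of favourable sign by Lemma 3.1(ii), while the range $|\xi|\le k_s$ is uniformly bounded by Lemma 3.1(iii). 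Combining this with the ellipticity $\mathcal{E}(u,\bar u)\ge\mu|\nabla u|^2$ and the Poincar\'e inequality \eqref{eq3.9} yields a decomposition $\mathcal{A}=\mathcal{A}_+ +\mathcal{A}_-$, in which $\mathcal{A}_+$ is coercive (hence a boundedly invertible isomorphism onto $V_h^*$ by Lax--Milgram) and $\mathcal{A}_-$ is a bounded lower-order perturbation collecting the $-\omega^2$ mass term and the low-frequency part of the DtN contribution.

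The step I expect to be the main obstacle is that $D_h$ is unbounded in the $x_1$-direction, so the embedding $H^1(D_h)\hookrightarrow L^2(D_h)$ is \emph{not} compact; consequently $\mathcal{A}_-$ is not compact and the classical Fredholm alternative cannot be used to convert uniqueness into existence. This is exactly where semi-Fredholm theory must enter, and resolving it is the technical heart of the argument. The route I would take is to exploit the a priori bound to pin down the index of $\mathcal{A}$: the uniqueness just proved (and the analogous injectivity for the adjoint problem, which is a scattering problem of the same structure and hence obeys an estimate of the same form) shows $\mathcal{A}$ has trivial kernel and, via its closed range from the G\aa rding inequality, dense range, so $\mathcal{A}$ is an isomorphism. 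Equivalently, one may run a method-of-continuity argument along $\mathcal{A}_t=\mathcal{A}_+ +t\,\mathcal{A}_-$, $t\in[0,1]$, each $\mathcal{A}_t$ being the operator of a scattering-type problem to which the derivation of Lemmas 3.2--3.4 and Theorem 3.1 applies with $t$-uniform constants; the resulting uniform lower bound keeps the family semi-Fredholm of constant index, and since $\mathcal{A}_0=\mathcal{A}_+$ is an index-zero isomorphism, so is $\mathcal{A}_1=\mathcal{A}$. The most robust realization, closest to \cite{r13}, is instead to truncate $D_h$ to $D_h\cap\{|x_1|<A\}$, where the embedding \emph{is} compact so that Fredholm plus uniqueness gives solvability, and then to let $A\to\infty$ using the $A$-independent a priori bound to extract a weak limit solving Variation problem 1. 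In every case surjectivity of $\mathcal{A}$ onto $V_h^*$ produces the required $u\in V_h$ for arbitrary $g\in L^2(D)^2$, completing the proof.
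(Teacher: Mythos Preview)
Your proposal is correct and matches the paper's approach: the paper omits the proof entirely, citing \cite{r13} and noting only that ``the stability result directly implies uniqueness'' while ``it also implies existence by semi-Fredholm operator theory''---precisely the two-part structure you sketch. One small caveat: in your first existence route you write ``closed range from the G\aa rding inequality,'' but in the unbounded strip $D_h$ the G\aa rding decomposition alone does not give closed range (the lower-order piece $\mathcal{A}_-$ is not compact, as you yourself observe a line later); what actually delivers the semi-Fredholm property is the uniform a priori lower bound, and it is your method-of-continuity and truncation alternatives that make this rigorous and are closest to what \cite{r13} does.
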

The proof can be found in \cite{r13} and hence be omitted here.

\section{Well-posedness and an \textit{a priori} bound for random case}
In this section, we will consider the well-posedness of VP 2. The proof is based on the general framework by Pembery and Spence in \cite{r17}. 
Firstly we show both the sesquilinear form $\mathcal{B}$ and the antilinear functional $\mathcal{G}$ are well-defined which is based on measurability and $\mathbb{P}$-essentially separability of $c$.
For measurability and $\mathbb{P}$-essentially separability of $c$, the following condition is necessary.
\begin{Co}
    The map $c_1$: $\Omega \to \mathcal{C}_1$ defined by \[
c_1(\eta)=f(\eta)
    \] satisfies $c_1 \in L^2(\Omega; \mathcal{C}_1)$ and the map $c_2$: $\Omega \to \mathcal{C}_2$ defined by \[
c_2(\eta)=\tilde{g}(\eta)
    \] satisfies $c_2 \in L^2(\Omega; \mathcal{C}_2)$.
\end{Co}
It implies the following lemma.
\begin{Le}
    Under Condition 4.1, the map $c$ is measurable and $\mathbb{P}$-essentially separable.
\end{Le}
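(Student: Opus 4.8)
The plan is to establish the two required properties of the input map $c = (c_1, c_2): \Omega \to \mathcal{C} = \mathcal{C}_1 \times \mathcal{C}_2$ separately on each factor and then combine them, since measurability and $\mathbb{P}$-essential separability are both preserved under finite products in the product topology. Condition 4.1 gives us that $c_1 \in L^2(\Omega; \mathcal{C}_1)$ and $c_2 \in L^2(\Omega; \mathcal{C}_2)$, so each component already lies in a Bochner space. The key observation is that membership in a Bochner space $L^2(\Omega; X)$ encodes exactly the two conclusions we want: by the Pettis measurability theorem, a function is strongly (Bochner) measurable if and only if it is weakly measurable and $\mathbb{P}$-essentially separably valued, and strong measurability is precisely what the definition of the Bochner space requires of its elements.

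\emph{First I would} unpack Condition 4.1 through the Pettis measurability theorem. Because $c_1 \in L^2(\Omega;\mathcal{C}_1)$, the function $c_1$ is strongly measurable, which by Pettis is equivalent to $c_1$ being weakly measurable together with $\mathbb{P}$-essentially separably valued; in particular $c_1$ is $\mathcal{A}$-measurable as a map into $(\mathcal{C}_1, \|\cdot\|_{1,\infty})$ and there is a $\mathbb{P}$-null set $N_1$ such that $c_1(\Omega \setminus N_1)$ is separable in $\mathcal{C}_1$. The identical argument applied to $c_2 \in L^2(\Omega; \mathcal{C}_2)$ produces measurability of $c_2$ into $(\mathcal{C}_2, \|\cdot\|_{H^1(D_h)^2})$ and a null set $N_2$ with $c_2(\Omega \setminus N_2)$ separable in $\mathcal{C}_2$.

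\emph{Then I would} assemble the product statement. For measurability of $c = (c_1, c_2)$ into the product space $\mathcal{C}$ equipped with the product topology, I would use that the Borel $\sigma$-algebra of a product of two metric (hence second-countable-when-separable) spaces is generated by measurable rectangles $A_1 \times A_2$, and the preimage $c^{-1}(A_1 \times A_2) = c_1^{-1}(A_1) \cap c_2^{-1}(A_2)$ is measurable since each factor is; thus $c$ is measurable. For $\mathbb{P}$-essential separability, set $N = N_1 \cup N_2$, which is $\mathbb{P}$-null, and observe that $c(\Omega \setminus N) \subset c_1(\Omega \setminus N_1) \times c_2(\Omega \setminus N_2)$ is contained in a product of separable sets, hence separable in the product metric topology. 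This gives that $c$ is $\mathbb{P}$-essentially separably valued, completing both claims.

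\emph{The main obstacle} I anticipate is purely bookkeeping around the product topology: one must verify that the product Borel $\sigma$-algebra coincides with (or is contained in) the $\sigma$-algebra generated by rectangles, which is automatic here because both $\mathcal{C}_1$ and $\mathcal{C}_2$ carry metric topologies and—after restricting to the separable images off the null set $N$—are separable metric, so the product is second countable and the rectangle generation of the Borel $\sigma$-algebra holds without set-theoretic pathology. The only subtlety worth a remark is that $\mathcal{C}_1$ is a subset of $Lip(\mathbb{R})$ with the $\|\cdot\|_{1,\infty}$ norm, which is not separable as a whole; but this is harmless, since the Pettis theorem only requires separability of the \emph{essential range}, which is exactly what Condition 4.1 delivers. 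No further regularity of $f(\eta)$ or $\tilde{g}(\eta)$ beyond Condition 4.1 is needed for this lemma.
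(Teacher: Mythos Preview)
Your proposal is correct and follows essentially the same approach as the paper: use Condition~4.1 to deduce strong measurability of $c_1$ and $c_2$, invoke the Pettis measurability theorem to obtain measurability and $\mathbb{P}$-essential separability of each component, and then pass to the product $c = c_1 \times c_2$. The paper's proof is a two-sentence sketch of exactly this argument, so your added detail on the product Borel $\sigma$-algebra and the non-separability of $\mathcal{C}_1$ is helpful but not a departure from the intended route.
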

\begin{proof}
    Since Condition 4.1 means $c_1$ and $c_2$ are strongly measurable, by Pettis measurability theorem (see \cite{r27}) they are measurable and $\mathbb{P}$-essentially separable. So $c=c_1 \times c_2$ is measurable and $\mathbb{P}$-essentially separable (see \cite{r27}).
\end{proof}
Then prove that the sesquilinear form $\mathcal{B}$ is well-defined by the continuity of $\mathscr{B}$ and the regularity of $\mathscr{B}\circ c$.
\begin{Le}
   (i) The map $\mathscr{B}$: $\mathcal{C} \to B(V_h,V_h^*) $ is continuous. 
   
   (ii)The map $\mathscr{B}\circ c \in L^\infty(\Omega; B(V_h,V_h^*)).$

   (iii) The sesquilinear form $\mathcal{B}$ is well-defined on $L^2(\Omega ; V_h) \times L^2(\Omega ; V_h). $
\end{Le}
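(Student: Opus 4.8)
For part (i), the continuity of $\mathscr{B}: \mathcal{C} \to B(V_h, V_h^*)$, I would fix $(\phi,\psi) \in \mathcal{C}$ and a nearby point $(\phi',\psi')$, and estimate the operator norm $\|B_{(\phi,\psi)} - B_{(\phi',\psi')}\|_{B(V_h, V_h^*)}$ by bounding $|B_{(\phi,\psi)}(u,v) - B_{(\phi',\psi')}(u,v)|$ uniformly over $u,v$ in the unit ball of $V_h$. Inspecting the defining formula \eqref{eq2.14}, the only objects depending on $(\phi,\psi)$ are the Jacobian-related quantities $\mathcal{J}_{\mathcal{H}^{-1}}$, $\mathcal{J}_{\mathcal{H}^{-1}}^\top$, and $\det \mathcal{J}_{\mathcal{H}}$, which enter through the transform $\mathcal{H}$ built from the surface $\phi$ (the component $\psi$ does not appear in $\mathscr{B}$ at all). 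The entries $J_1, J_2$ of $\mathcal{J}_{\mathcal{H}}$ depend linearly on $\phi - f_0$ and its derivative, hence continuously on $\phi$ in the $\|\cdot\|_{1,\infty}$ norm. Since the lower bound \eqref{eq2.13} guarantees $\mathcal{H}$ is invertible with $|J_2|$ bounded away from $1$ uniformly on $\mathcal{C}_1$, the map $\phi \mapsto \mathcal{J}_{\mathcal{H}^{-1}}$ and $\phi \mapsto \det \mathcal{J}_{\mathcal{H}}$ are continuous (indeed Lipschitz) in $\|\cdot\|_{\infty}$, with all quantities uniformly bounded. Factoring the difference of products into telescoping sums and applying Cauchy--Schwarz on each integral over $D_h$ then gives $|B_{(\phi,\psi)}(u,v) - B_{(\phi',\psi')}(u,v)| \le C \|\phi - \phi'\|_{1,\infty} \|u\|_{V_h} \|v\|_{V_h}$, which is precisely continuity in operator norm. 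The boundary term over $\Gamma_h$ is identical in both forms (since $\mathcal{H}$ fixes $\Gamma_h$) and cancels.

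For part (ii), $\mathscr{B} \circ c \in L^\infty(\Omega; B(V_h, V_h^*))$, I would first note that by Lemma 4.1 the input map $c$ is measurable and $\mathbb{P}$-essentially separable, and by part (i) $\mathscr{B}$ is continuous; composing a continuous map with a strongly measurable map yields a strongly measurable map, so $\mathscr{B} \circ c$ is measurable. To upgrade measurability to $L^\infty$ membership, I would establish a \emph{uniform} bound $\|\mathscr{B}(c(\eta))\|_{B(V_h,V_h^*)} \le C$ independent of $\eta$. This follows from the defining formula together with the uniform bounds already available: the Jacobian entries are controlled through $\|f(\eta) - f_0\|_{1,\infty} \le M_0$ and the cutoff bound \eqref{eq2.12}, while $\det \mathcal{J}_{\mathcal{H}}$ and $\mathcal{J}_{\mathcal{H}^{-1}}$ are uniformly bounded by \eqref{eq2.13}; the DtN boundary term contributes a fixed $\eta$-independent bound via the continuity of $\mathcal{T}$ from Lemma 3.1(i). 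Hence the essential supremum is finite.

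For part (iii), the well-definedness of $\mathcal{B}$ on $L^2(\Omega; V_h) \times L^2(\Omega; V_h)$, I would verify that for $u, v \in L^2(\Omega; V_h)$ the integrand $\eta \mapsto B_{c(\eta)}(u(\eta), v(\eta))$ is integrable, so that $\mathcal{B}(u,v) = \int_\Omega B_{c(\eta)}(u,v)\,\mathrm{d}\mathbb{P}(\eta)$ is finite. Using the uniform operator bound from (ii), I would estimate $|B_{c(\eta)}(u(\eta),v(\eta))| \le C \|u(\eta)\|_{V_h} \|v(\eta)\|_{V_h}$, and then apply Cauchy--Schwarz over $\Omega$ to obtain $|\mathcal{B}(u,v)| \le C \|u\|_{L^2(\Omega; V_h)} \|v\|_{L^2(\Omega; V_h)}$ via Bochner integrability. \textbf{The main obstacle} is part (i): one must carefully track how the matrix products in \eqref{eq2.14} depend on the surface perturbation and confirm the differences are Lipschitz in $\|\cdot\|_{1,\infty}$ uniformly over the admissible class $\mathcal{C}_1$, which is exactly where the non-degeneracy condition \eqref{eq2.13} is essential for keeping $\mathcal{J}_{\mathcal{H}^{-1}}$ bounded.
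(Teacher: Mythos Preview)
Your proposal is correct and follows essentially the same approach as the paper: for (i) you both expand the difference $B_{(\phi,\psi)} - B_{(\phi',\psi')}$, observe that only the Jacobian data depend on the surface component, and use the $O(\|\phi-\phi'\|_{1,\infty})$ expansion of $\det\mathcal{J}_\mathcal{H}$ and $\mathcal{J}_{\mathcal{H}^{-1}}$ (guaranteed non-degenerate by \eqref{eq2.13}) to get the Lipschitz operator-norm bound; for (ii) you both combine the Lipschitz estimate from (i) with the uniform constraint $\|f(\eta)-f_0\|_{1,\infty}\le M_0$ to obtain a bound independent of $\eta$. The only cosmetic difference is in (iii): the paper simply invokes Lemma~2.7 of \cite{r17} (Pembery--Spence), which packages the measurability-plus-integrability argument you sketch directly via the uniform bound and Cauchy--Schwarz over $\Omega$; your explicit route is exactly what that cited lemma does, so the two are the same argument with one step cited versus spelled out.
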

\begin{proof}
    (i) For convenience, we only prove the continuity at the point $(f_0,g_0)\in \mathcal{C}$. At the other points, the proof of continuity is similar. Consider the sequence $\{(f_m, g_m)\} \subset \mathcal{C}$ such that $(f_m, g_m) \to (f_0,g_0)$ in $\mathcal{C}$ when $m \to \infty$.
     Denote the transform by
\begin{equation*}
    \mathcal{H}_m(y)=y_2+\alpha(y_2-f_0(y_1))(f_m(y_1)-f_0(y_1))e_2,\quad y \in D_h.
\end{equation*}
    For any $u,v \in V_h$, \begin{align*}
B_{(f_m, g_m)}(u,v)&-B(u,v)= 
\mu\int_{D_h} \sum_{j=1}^2\nabla u_j(I_2-\mathcal{J}_{\mathcal{H}^{-1}_m}\mathcal{J}_{\mathcal{H}^{-1}_m}^\top\det{\mathcal{J}_{\mathcal{H}_m}})\nabla \bar{v}_j\,\text{d}x \\
    &+(\lambda+\mu)\int_{D_h} (\nabla\cdot u)(\nabla\cdot \bar{v})-(\nabla\tilde{u}:\mathcal{J}_{\mathcal{H}^{-1}_m})(\nabla\bar{\tilde{v}}:\mathcal{J}_{\mathcal{H}^{-1}_m}^\top)\det{\mathcal{J}_{\mathcal{H}_m}}\,\text{d}x \\
    &-\omega^2\int_{D_h}u\cdot\bar{v}(\det{\mathcal{J}_{\mathcal{H}_m}}-1)\,\text{d}x.
    \end{align*} 
    By direct calculation, we have\begin{equation} \label{eq4.1}
\det{\mathcal{J}_{\mathcal{H}_m}}=1+O(\|f_m-f_0\|_{1,\infty}), \quad \mathcal{J}_{\mathcal{H}^{-1}_m}=I_2+O(\|f_m-f_0\|_{1,\infty}),
    \end{equation} which implies that\begin{equation}\label{eq4.2}
\mathcal{J}_{\mathcal{H}^{-1}_m}\mathcal{J}_{\mathcal{H}^{-1}_m}^\top\det{\mathcal{J}_{\mathcal{H}_m}}=I_2+O(\|f_m-f_0\|_{1,\infty}).
    \end{equation} These conclusions show that\begin{equation}\label{eq4.3}
        |B_{(f_m,g_m)}(u,v)-B(u,v)|\le C\|u\|_{H^1(D_h)^2}\|v\|_{H^1(D_h)^2}\|f_m-f_0\|_{1,\infty}.
    \end{equation}
    It turns out when $m \to \infty$, \begin{equation*}
\|B_{(f_m,g_m)}-B\|_{B(V_h,V_h^*)}\le C \|f_m-f_0\|_{1,\infty} \to 0.
    \end{equation*} This completes the proof.
    
    (ii)
    For any $u,v \in V_h$, we have
\begin{equation*}
    |B_{c(\eta)}(u,v)|\le|B_{c(\eta)}(u,v)-B(u,v)|+|B(u,v)|.
\end{equation*}
    Similarly as \eqref{eq4.3}, we have
    \begin{equation*}
        |B_{c(\eta)}(u,v)|\le C\|u\|_{H^1(D_h)^2}\|v\|_{H^1(D_h)^2}(\|f(\eta)-f_0\|_{1,\infty}+1).
    \end{equation*}
    Recall \begin{equation*}
\|f(\eta)-f_0\|_{1,\infty} \le M_0.
    \end{equation*} 
    It implies that\[
|B_{c(\eta)}(u,v)|\le C(M_0+1)\|u\|_{H^1(D_h)^2}\|v\|_{H^1(D_h)^2} \infty,
    \] which means $\mathscr{B}\circ c \in L^\infty(\Omega; B(V_h,V_h^*)).$

    (iii) In order to show $\mathcal{B}$ is well-defined, we must show $B_{c(\eta)}(v_1,v_2)$ is integrable for any $v_1, v_2 \in L^2(\Omega; V_h)$ and $B_{c(\eta)}(v_1,\cdot) \in L^2(\Omega; V_h)$. Combining (i), (ii) and applying Lemma 2.7 in \cite{r17} complete this proof.
\end{proof}
Next give a similar lemma for the antilinear functional $\mathcal{G}$.
\begin{Le}
(i)
The map $\mathscr{G}$: $\mathcal{C}\to V_h^*$ is continuous.

(ii) The map $\mathscr{G}\circ c \in L^2(\Omega; V_h^*).$

(iii) The antilinear functional $\mathcal{B}$ is well-defined on $L^2(\Omega ; V_h). $
\end{Le}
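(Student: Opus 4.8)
The plan is to mirror exactly the three-part structure used for the sesquilinear form $\mathcal{B}$ in Lemma 4.3, since $\mathcal{G}$ is built from $G_{c(\eta)}$ in precisely the same way that $\mathcal{B}$ is built from $B_{c(\eta)}$, only now the object is an antilinear functional rather than a bounded operator. Recall from \eqref{eq2.16} that
\[
G_{c(\eta)}(v) = -\int_{D_h} \tilde{g}(\eta)\cdot \bar{v}\,\det \mathcal{J}_{\mathcal{H}}\,\mathrm{d}x,
\]
so the two inputs that vary with $\eta$ are the source $\tilde{g}(\eta) \in H^1_0(D_h)^2 = \mathcal{C}_2$ and, through the Jacobian determinant $\det\mathcal{J}_{\mathcal{H}}$, the surface $f(\eta) \in \mathcal{C}_1$.

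For part (i), I would prove continuity of $\mathscr{G}$ at the reference point $(f_0,g_0)$ (the argument at a general point being identical). Taking a sequence $(f_m,g_m)\to(f_0,g_0)$ in $\mathcal{C}$ and writing $\mathcal{H}_m$ for the associated transform as in the proof of Lemma 4.3, I would split the difference $G_{(f_m,g_m)}(v)-G_{(f_0,g_0)}(v)$ into a term carrying $g_m-g_0$ and a term carrying $\det\mathcal{J}_{\mathcal{H}_m}-1$. Using $\det\mathcal{J}_{\mathcal{H}_m}=1+O(\|f_m-f_0\|_{1,\infty})$ from \eqref{eq4.1}, together with the uniform bound $\|\tilde g_m\|_{H^1_0(D_h)^2}$ and the Cauchy--Schwarz inequality, one obtains an estimate of the form
\[
|G_{(f_m,g_m)}(v)-G_{(f_0,g_0)}(v)| \le C\big(\|g_m-g_0\|_{H^1(D_h)^2}+\|f_m-f_0\|_{1,\infty}\big)\|v\|_{H^1(D_h)^2},
\]
whence $\|\mathscr{G}((f_m,g_m))-\mathscr{G}((f_0,g_0))\|_{V_h^*}\to 0$.

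For part (ii), I would estimate $\|G_{c(\eta)}\|_{V_h^*}$ pointwise by Cauchy--Schwarz, obtaining $\|G_{c(\eta)}\|_{V_h^*}\le C\,\|\tilde g(\eta)\|_{H^1_0(D_h)^2}(1+\|f(\eta)-f_0\|_{1,\infty})\le C(1+M_0)\|\tilde g(\eta)\|_{\mathcal{C}_2}$, using $\det\mathcal{J}_{\mathcal{H}}=1+O(M_0)$ and the uniform bound on $\|f(\eta)-f_0\|_{1,\infty}$. Squaring and integrating over $\Omega$ and invoking $c_2\in L^2(\Omega;\mathcal{C}_2)$ from Condition 4.1 then yields $\mathscr{G}\circ c\in L^2(\Omega;V_h^*)$; measurability of the composition follows from part (i) together with the measurability of $c$ established in Lemma 4.2. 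Note the exponent is $L^2$ here rather than $L^\infty$, reflecting that the source enters only through $\mathcal{C}_2=H^1_0(D_h)^2$ with an $L^2(\Omega)$ bound, unlike the operator $\mathscr{B}$ which was uniformly bounded. Finally, for part (iii) I would combine (i) and (ii) and apply the measurability/integrability machinery of \cite{r17} (Lemma 2.7 there): since $\mathscr{G}\circ c$ lies in $L^2(\Omega;V_h^*)$ and any $v\in L^2(\Omega;V_h)$, the pairing $G_{c(\eta)}(v)$ is integrable over $\Omega$ by the Bochner integrability theorem, so $\mathcal{G}(v)=\int_\Omega G_{c(\eta)}(v)\,\mathrm{d}\mathbb{P}(\eta)$ is well-defined. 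I expect no genuine obstacle here; the only point requiring care is bookkeeping the two distinct sources of $\eta$-dependence (the source $\tilde g$ and the Jacobian through $f$) so that both the $g_m-g_0$ and $f_m-f_0$ contributions appear correctly in the continuity estimate of part (i).
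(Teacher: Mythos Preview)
Your proposal is correct and follows essentially the same route as the paper: for (i) you split $G_{(f_m,g_m)}(v)-G_{(f_0,g_0)}(v)$ into a source term and a Jacobian term and use $\det\mathcal{J}_{\mathcal{H}_m}=1+O(\|f_m-f_0\|_{1,\infty})$, exactly as the paper does; for (iii) you invoke Lemma~2.7 in \cite{r17}, again matching the paper. The only minor deviation is in (ii): the paper bounds $\|G_{c(\eta)}\|_{V_h^*}$ via the triangle inequality $|G_{c(\eta)}(v)|\le |G_{c(\eta)}(v)-G(v)|+|G(v)|$ and then recycles the estimate from (i), arriving at $\|G_{c(\eta)}\|_{V_h^*}\le C(\|g_0-\tilde g(\eta)\|_{H^1(D_h)^2}+M_0+1)$, whereas you estimate directly by Cauchy--Schwarz to get $\|G_{c(\eta)}\|_{V_h^*}\le C(1+M_0)\|\tilde g(\eta)\|_{\mathcal{C}_2}$; both bounds land in $L^2(\Omega)$ by Condition~4.1, so this is a cosmetic difference.
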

\begin{proof}
    (i) Similarly as Lemma 4.2, we assume $(f_m,g_m) \to (f_0,g_0)$ in $\mathcal{C}$. For any $v \in V_h$,
    \[
G_{(f_m,g_m)}(v)-G(v)=\int_{D_h} (g_0-g_m\det{\mathcal{J}_{\mathcal{H}_m}})\cdot\bar{v}\,\text{d}x.
    \]
    So we have\begin{align*}
|G_{(f_m,g_m)}(v)-G(v)|&\le \int_{D_h}|g_0-g_m||v|\,\text{d}x+\int_{D_h}|g_m\det{\mathcal{J}_{\mathcal{H}_m}}-g_m||v|\,\text{d}x\\
&\le C\|v\|_{H^1(D_h)^2}(\|g_0-g_m\|_{L^2(D_h)^2}+\|g_m\|_{L^2(D_h)^2}\|f_m-f_0\|_{1,\infty}).
    \end{align*}
     It turns out when $m \to \infty$, \begin{equation}\label{eq4.4}
\|G_{(f_m,g_m)}-G\|_{V_h^*}\le C(\|g_0-g_m\|_{L^2(D_h)}+\|f_m-f_0\|_{1,\infty})\to 0.
    \end{equation} So $\mathscr{G}$ is continuous.
    
    (ii) For any $v \in V_h$, we have
    \begin{equation*}
        |G_{c(\eta)}(v)|\le |G_{c(\eta)}(v)-G(v)|+|G(v)|.
    \end{equation*}
    Similarly as \eqref{eq4.4}, we can see \begin{equation*}
        \|G_{c(\eta)}\|_{V_h^*}\le C(\|g_0-\tilde{g}(\eta)\|_{H^1(D_h)^2}+M_0+1).
    \end{equation*}
    Since probability measure is finite, \[
C(M_0+1) \in L^2(\Omega).
    \]
    Condition 4.1 means \[
C(\|g_0-\tilde{g}(\eta)\|_{H^1(D_h)^2})  \in L^2(\Omega).
    \] So we have $\|G_{c(\eta)}\|_{V_h^*} \in L^2(\Omega)$ which completes the proof.
    
    (iii) In order to show $\mathcal{G}$ is well-defined, we must show $G_{c(\eta)}(v_1)$ is integrable for any $v_1 \in L^2(\Omega; V_h)$ and $G_{c(\eta)} \in L^2(\Omega; V_h)$. Combining (i), (ii) and applying Lemma 2.7 in \cite{r17} completes this proof.
\end{proof}

For any given $\eta$, we consider the  following deterministic variation problem.

\emph{Variation problem 3} (VP 3) Find $u(\eta) \in V_h$ such that $B_{c(\eta)}(u(\eta),v)=G_{c(\eta)}(v),\quad \forall v \in V_h.$

The existence and uniqueness of VP 3 is directly deduced by Theorem 3.2. The \textit{a priori} bound in Theorem 3.1 can also be used for VP 3. Notice that $L(\eta)$ satisfies \[
L(\eta) \le L+M_0.
\] %This means the following theorem.
\begin{Th}
For any given $\eta$, Variation problem 3 admits a unique solution $u(\eta) \in V_h$. 
And the \textit{a priori} bound\[\|u^*(\eta)\|_{H^1(
D_h(\eta))^2}
\le (h-m+2)(C_4(\omega, h )+C_5(\omega, h )+C_6(\omega, h,L_0) ) \|g(\eta)\|_{H^1(D_h(\eta))^2}
\] holds for $u^*(\eta)=u(\eta)\circ \mathcal{H}^{-1}$ with $L_0=M_0+L$.
\end{Th}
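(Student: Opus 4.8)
The plan is to recognise that Variation problem 3, although posed on the fixed space $V_h$, is nothing but the deterministic Variation problem 1 on the sample domain $D_h(\eta)$ written in transformed coordinates, so that the entire machinery of Section 3 applies sample by sample. First I would fix $\eta \in \Omega$ and note that, because $h$ is chosen so that \eqref{eq2.13} holds, the transform $\mathcal{H}$ is invertible; since $\mathcal{H}$ and $\mathcal{H}^{-1}$ are Lipschitz with bounded Jacobians, fix $\Gamma_h$, and carry the homogeneous Dirichlet trace from $S$ to $S(\eta)$, the map $u \mapsto u\circ\mathcal{H}$ is a bounded linear bijection of $V_h(\eta)$ onto $V_h$ with bounded inverse $w\mapsto w\circ\mathcal{H}^{-1}$. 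Combining this with the identities \eqref{eq2.15} and \eqref{eq2.17}, one sees that $u(\eta)\in V_h$ solves VP 3 if and only if $u^*(\eta):=u(\eta)\circ\mathcal{H}^{-1}\in V_h(\eta)$ satisfies $\tilde{B}_\eta(u^*(\eta),w)=\tilde{G}_\eta(w)$ for all $w\in V_h(\eta)$, i.e. $u^*(\eta)$ solves the untransformed Variation problem 1 on $D_h(\eta)$ with surface $S(\eta)$ and source $g(\eta)$.

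With this equivalence in hand, existence and uniqueness are immediate. Since $g(\eta)\in H^1(D(\eta))^2\subset L^2(D(\eta))^2$ has support in $D_h(\eta)$ and $S(\eta)$ is a Lipschitz graph with $m<f(\eta)<M$, Theorem 3.2 yields a unique $u^*(\eta)\in V_h(\eta)$; pulling back through the bijection gives the unique solution $u(\eta)=u^*(\eta)\circ\mathcal{H}\in V_h$ of VP 3. Note that the theorem asks for the bound on $u^*(\eta)$ in $H^1(D_h(\eta))^2$, not on $u(\eta)$, so I need not transport the estimate back to $V_h$.

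For the bound I would simply invoke Theorem 3.1 for Variation problem 1 on $D_h(\eta)$, which, using $g(\eta)\in H^1$, gives $\|u^*(\eta)\|_{H^1(D_h(\eta))^2}\le (h-m+2)\big(C_4(\omega,h)+C_5(\omega,h)+C_6(\omega,h,L(\eta))\big)\|g(\eta)\|_{H^1(D_h(\eta))^2}$. The last step is to absorb the $\eta$-dependence hidden in $L(\eta)$: the assumption $\|f(\eta)-f_0\|_{1,\infty}\le M_0$ forces $L(\eta)\le L+M_0=L_0$, and $C_6$ is increasing in its Lipschitz argument because $C_1\propto(1+L^2)^{1/2}$ and $C_2\propto(1+L^2)^{1/4}$ are both increasing in $L$ (while $C_4,C_5$ carry no $L$). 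Hence $C_6(\omega,h,L(\eta))\le C_6(\omega,h,L_0)$, which turns the estimate into exactly the stated bound.

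Essentially all the real work lives in the deterministic Section 3; the only genuine point to verify is the equivalence of the first step, namely that $\mathcal{H}$ is a bicontinuous isomorphism $V_h(\eta)\cong V_h$ intertwining the two variational formulations through \eqref{eq2.15}--\eqref{eq2.17}. I expect this to be the main, if modest, obstacle, since it requires checking both the invertibility of $\mathcal{H}$ (guaranteed by \eqref{eq2.13}) and that composition with $\mathcal{H}$ preserves the function spaces and boundary conditions; once that is secured, the remainder is a direct appeal to Theorems 3.1 and 3.2 together with the monotonicity of $C_6$ in $L$.
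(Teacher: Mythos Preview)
Your proposal is correct and follows exactly the paper's own argument: reduce VP~3 to VP~1 on $D_h(\eta)$ via the change of variables $\mathcal{H}$ (using \eqref{eq2.15}, \eqref{eq2.17} and the invertibility guaranteed by \eqref{eq2.13}), invoke Theorem~3.2 for existence and uniqueness and Theorem~3.1 for the bound, and then replace $L(\eta)$ by $L_0=L+M_0$ via the monotonicity of $C_6$ in its Lipschitz argument. The paper's proof is in fact shorter than yours, leaving the monotonicity step implicit (it is noted just before the theorem).
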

\begin{proof}
    For any given $\eta$, if $u(\eta)$ is a solution to VP 3, $u^*(\eta)=u(\eta)\circ \mathcal{H}^{-1}$ is solution to VP 1 corresponding to $f(\eta)$ and $g(\eta)$. Conversely, if $u(\eta)$ is solution to VP 1 corresponding to $f(\eta)$ and $g(\eta)$, $\tilde{u}(\eta)=u(\eta)\circ \mathcal{H}$ is solution to VP 3. So Theorem 3.2 implies existence and uniqueness of VP 3 and Theorem 3.1 implies the \textit{a priori} bound.
\end{proof}
Theorem 4.1 shows there exists a solution $u(\eta)$ to VP 3 for given $\eta$. In fact, we can prove $u(\eta) \in L^2(\Omega; V_h)$.
\begin{Le}
     For the solution $u(\eta)$ to Variation problem 3, $u(\eta) \in L^2(\Omega; V_h)$.
\end{Le}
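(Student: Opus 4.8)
The plan is to establish the two defining properties of membership in the Bochner space $L^2(\Omega;V_h)$, namely strong measurability of $\eta\mapsto u(\eta)$ and square-integrability of $\eta\mapsto\|u(\eta)\|_{V_h}$, and then to conclude by Bochner's integrability theorem. I would treat the two properties separately, since they rely on different ingredients already assembled above.

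For measurability, the idea is to realize $u(\eta)$ as the image of $c(\eta)$ under a \emph{continuous} solution map. By Theorem 3.2 together with the correspondence established in the proof of Theorem 4.1 (a solution of the transformed form corresponds to a VP 1 solution for the surface $\phi$ and the source obtained from $\psi$ via $\mathcal{H}^{-1}$), the operator $B_{(\phi,\psi)}=\mathscr{B}((\phi,\psi))\in B(V_h,V_h^*)$ is invertible for every $(\phi,\psi)\in\mathcal{C}$, not merely along the image of $c$. Hence I can define $\mathcal{S}:\mathcal{C}\to V_h$ by $\mathcal{S}((\phi,\psi))=[\mathscr{B}((\phi,\psi))]^{-1}\mathscr{G}((\phi,\psi))$, so that $u(\eta)=\mathcal{S}(c(\eta))$. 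Lemmas 4.2(i) and 4.3(i) give the continuity of $\mathscr{B}$ and $\mathscr{G}$; composing these with the continuity of operator inversion on the open set of invertible elements of $B(V_h,V_h^*)$ and with the continuous bilinear application map $(T,\ell)\mapsto T\ell$ shows that $\mathcal{S}$ is continuous on all of $\mathcal{C}$. Since $c$ is measurable and $\mathbb{P}$-essentially separably valued by Lemma 4.1, the composition $u=\mathcal{S}\circ c$ is again measurable and $\mathbb{P}$-essentially separably valued, hence strongly measurable by the Pettis measurability theorem.

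For integrability, I would transport the a priori bound of Theorem 4.1 back to the fixed reference domain $D_h$. Writing $u(\eta)=u^*(\eta)\circ\mathcal{H}$ and $g(\eta)=\tilde{g}(\eta)\circ\mathcal{H}^{-1}$, the change of variables $x=\mathcal{H}(y)$ converts $\|u(\eta)\|_{H^1(D_h)^2}$ and $\|g(\eta)\|_{H^1(D_h(\eta))^2}$ into the corresponding norms of $u^*(\eta)$ on $D_h(\eta)$ and $\tilde{g}(\eta)$ on $D_h$, up to multiplicative constants controlled by $\|\mathcal{J}_\mathcal{H}\|$, $\|\mathcal{J}_{\mathcal{H}^{-1}}\|$ and $\det\mathcal{J}_\mathcal{H}$. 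The constraint $\|f(\eta)-f_0\|_{1,\infty}\le M_0$ together with \eqref{eq2.13} makes these constants uniform in $\eta$. Feeding this into Theorem 4.1 yields $\|u(\eta)\|_{H^1(D_h)^2}\le C\,\|\tilde{g}(\eta)\|_{H^1(D_h)^2}=C\,\|c_2(\eta)\|_{\mathcal{C}_2}$ with $C=C(\omega,h,L_0,M_0)$ independent of $\eta$. Squaring and integrating over $\Omega$ gives $\int_\Omega\|u(\eta)\|_{V_h}^2\,\mathrm{d}\mathbb{P}\le C^2\|c_2\|_{L^2(\Omega;\mathcal{C}_2)}^2<\infty$ by Condition 4.1.

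I expect the measurability step to be the main obstacle: the integrability is a fairly mechanical consequence of the deterministic bound and Condition 4.1, whereas the delicate point is to confirm that the solution map $\mathcal{S}$ is genuinely continuous on the whole of $\mathcal{C}$ rather than merely along the particular sequences produced by $c$. This is exactly what the uniform invertibility supplied by Theorem 4.1, combined with the openness of the set of invertible operators and continuity of inversion, is meant to guarantee.
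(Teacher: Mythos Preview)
Your proposal is correct and follows essentially the same route as the paper: Bochner's theorem reduces the claim to strong measurability plus square-integrability; measurability is obtained by writing $u(\eta)$ as the composition of $c$ with a continuous solution operator $\mathcal{S}=\mathscr{B}^{-1}\mathscr{G}$ (the paper calls it $\mathcal{U}$ and verifies continuity via the explicit estimate $\|u_m-u\|\le\|B_{(f_m,g_m)}^{-1}-B^{-1}\|\|G_{(f_m,g_m)}\|+\|B^{-1}\|\|G_{(f_m,g_m)}-G\|$ together with \eqref{eq4.3}--\eqref{eq4.4}, whereas you invoke the abstract continuity of inversion on the invertibles of $B(V_h,V_h^*)$); integrability comes from transporting the deterministic bound of Theorem~4.1 through the change of variables $\mathcal{H}$, using the uniform Jacobian bounds supplied by \eqref{eq2.13} and $\|f(\eta)-f_0\|_{1,\infty}\le M_0$, and then Condition~4.1. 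The only cosmetic difference is the order in which the two pieces are treated.
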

\begin{proof}
By Bochner's integrability theorem (see \cite{r27}) we must prove $u(\eta)$ is strongly measurable and $\|u(\eta)\|^2_{H^1(D_h)^2} \in L^1(\Omega)$.

For $u(\eta)$ which is the solution to VP 3, taking $x=\mathcal{H}^{-1}(y)$ gives
\begin{align}\label{eq4.5}
     \|u(\eta)\|^2_{H^1(D_h)^2}&= \int_{D_h(\eta)} \sum_{j=1}^2\nabla{u}^*_j\mathcal{J}_{\mathcal{H}}\mathcal{J}_{\mathcal{H}}^\top\nabla \bar{u}^*_j\det{\mathcal{J}_{\mathcal{H}^{-1}}}\,\text{d}y\notag \\ &+\int_{D_h(\eta)} |u^*|^2\det{\mathcal{J}_{\mathcal{H}^{-1}}}\,\text{d}y
     \end{align} 
     with \[
u^*=u(\eta) \circ \mathcal{H}^{-1}.
     \]Recalling Section 2.2 and direct calculation give  \begin{equation}\label{eq4.6}
\mathcal{J}_{\mathcal{H}}=\left(\begin{array}{cc}
              1  & 0 \\
             J_1 & 1+J_2
         \end{array}\right)
     \end{equation}
     and \begin{equation}\label{eq4.7}
        \det{\mathcal{J}_{\mathcal{H}^{-1}}}=(1+J_2)^{-1}.
     \end{equation}
     Recalling \eqref{eq2.13} implies \begin{equation}\label{eq4.8}
|J_2|<\frac{M-m}{\gamma-2\delta}<1,\quad  |J_1| \le C M_0.
     \end{equation} Inserting \eqref{eq4.8} to \eqref{eq4.6}-\eqref{eq4.7} yields \begin{equation}\label{eq4.9}
      \det{\mathcal{J}_{\mathcal{H}}} \le C,\quad  \|\mathcal{J}_{\mathcal{H}}\| \le C.
     \end{equation}
 Then by \eqref{eq4.5}, \eqref{eq4.9} and Theorem 4.1, \begin{equation} \label{eq4.10}
     \|u(\eta)\|^2_{H^1(D_h)^2} \le C \|u^*\|^2_{H^1(D_h(\eta))^2} \le C\|g(\eta)\|^2_{H^1(D_h(\eta))^2} .
 \end{equation}
 Taking $x=\mathcal{H}(y)$ gives \begin{equation} \label{eq4.11}
     \|g(\eta)\|^2_{H^1(D_h(\eta))^2} \le \|\tilde{g}(\eta)\|^2_{H^1(D_h)^2} 
 \end{equation} similarly as \eqref{eq4.5}-\eqref{eq4.9}.
   Combining \eqref{eq4.10}-\eqref{eq4.11} gives \begin{equation*}
      \|u(\eta)\|^2_{H^1(D_h)^2} \le C \|\tilde{g}(\eta)\|^2_{H^1(D_h)^2}.
      \end{equation*}
      By Condition 4.1,  \[
 \|\tilde{g}(\eta)\|^2_{H^1(D_h)^2} \in L^1(\Omega),
      \]which shows $\|u(\eta)\|^2_{H^1(D_h)^2} \in L^1(\Omega)$.
      
      Next show $u(\eta)$ is strongly measurable. Define the solution operator $\mathcal{U} :\mathcal{C} \to V_h$ by \begin{equation*}
          \mathcal{U}(\phi, \psi)=u_{(\psi,\phi)} \quad \text{ for } (\psi,\phi) \in \mathcal{C},
      \end{equation*} where $u_{(\psi,\phi)}$ is the solution to VP 3 corresponding to $\phi, \psi$. Then we prove the solution operator is continuous.

Assume the sequence $\{(f_m,g_m)\} \subset \mathcal{C}$ satisfying $(f_m,g_m) \to (f_0,g_0)$ in $\mathcal{C}$. The variation formula \[
B_{(f_m,g_m)}(u_m,v)=G_{(f_m,g_m)}(v),\quad B(u,v)=G(v) \quad \forall v \in V_h
\] implies $u_m=B_{(f_m,g_m)}^{-1}G_{(f_m,g_m)}$ and $u=B^{-1}G$.
Then we obtain the inequality \begin{equation*}
    \|u_m-u\|_{H^1(D_h)^2} \le \|B_{(f_m,g_m)}^{-1}-B^{-1}\|\|G_{(f_m,g_m)}\|_{V_h^*}+\|B\|\|G_{(f_m,g_m)}-G\|.
\end{equation*}
Recalling \eqref{eq4.3} and \eqref{eq4.4} implies \[
\|u_m-u\|_{H^1(D_h)^2} \to 0,
\] which means the operator $\mathcal{U}$ is continuous.

      By the continuity of $\mathcal{U}$ and the strong measurability of $c$, we obtain (see \cite{r27}) $u(\eta)$ is strongly measurable which completes the proof.
\end{proof}
Based on the above conclusions, now we can prove the well-posedness of VP 2.  
 \begin{Th}
  Variation problem 2 exists a unique solution $u\in L^2(\Omega, V_h)$.
 \end{Th}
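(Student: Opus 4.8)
The plan is to reduce the stochastic problem VP 2 to the pathwise problem VP 3 and then assemble the pieces into the abstract well-posedness framework of Pembery and Spence \cite{r17}, whose hypotheses have been verified one by one in Lemmas 4.1--4.4. Concretely, Lemma 4.1 supplies measurability and $\mathbb{P}$-essential separability of the input map $c$; Lemmas 4.2 and 4.3 guarantee that $\mathcal{B}$ and $\mathcal{G}$ are well-defined on the Bochner spaces; and Theorem 4.1 establishes that VP 3 is uniquely solvable for every $\eta$, with an $\eta$-uniform \textit{a priori} bound (using $L(\eta)\le L_0=M_0+L$).

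For existence, I would define $u(\eta)$ to be, for each $\eta$, the unique solution of VP 3 furnished by Theorem 4.1. Lemma 4.4 shows precisely that this pathwise solution lies in $L^2(\Omega;V_h)$, so $u$ is an admissible candidate. It then remains to verify the variational identity in the integrated sense: for arbitrary $v\in L^2(\Omega;V_h)$ one has $v(\eta)\in V_h$ for a.e.\ $\eta$, and since $u(\eta)$ solves VP 3 we have $B_{c(\eta)}(u(\eta),v(\eta))=G_{c(\eta)}(v(\eta))$ for a.e.\ $\eta$; integrating over $\Omega$ against $\mathrm{d}\mathbb{P}$ and using that both integrands are integrable by Lemmas 4.2(iii) and 4.3(iii) yields $\mathcal{B}(u,v)=\mathcal{G}(v)$.

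For uniqueness I would use a localization argument. Suppose $u\in L^2(\Omega;V_h)$ solves VP 2. Testing against $v(\eta)=\chi_A(\eta)\,w$ for a fixed $w\in V_h$ and an arbitrary measurable set $A\in\mathcal{A}$ gives $\int_A\bigl[B_{c(\eta)}(u(\eta),w)-G_{c(\eta)}(w)\bigr]\,\mathrm{d}\mathbb{P}(\eta)=0$, and since $A$ is arbitrary the integrand vanishes for a.e.\ $\eta$ (for each fixed $w$). Choosing a countable dense subset $\{w_n\}\subset V_h$ (recall $V_h$ is a closed subspace of the separable space $H^1(D_h)^2$, hence separable) together with a common null set, and then passing to the limit via the continuity of $B_{c(\eta)}(u(\eta),\cdot)$ and $G_{c(\eta)}$, I obtain that for a.e.\ $\eta$ the identity $B_{c(\eta)}(u(\eta),w)=G_{c(\eta)}(w)$ holds for all $w\in V_h$. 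Thus $u(\eta)$ solves VP 3 for a.e.\ $\eta$, and the pathwise uniqueness of Theorem 4.1 forces $u(\eta)$ to coincide a.e.\ with the pathwise solution, giving uniqueness in $L^2(\Omega;V_h)$.

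The main obstacle is this uniqueness step: transferring the pathwise uniqueness of VP 3 into uniqueness in the Bochner space requires the localization-plus-separability argument above to upgrade ``for each fixed $w$, a.e.\ $\eta$'' into ``a.e.\ $\eta$, for all $w$''. This is exactly the technical content encapsulated in the Pembery--Spence framework, and it is what makes the $\mathbb{P}$-essential separability of Lemma 4.1 and the continuity statements of Lemmas 4.2--4.3 indispensable. The existence direction, by contrast, is comparatively routine once Lemma 4.4 has placed the pathwise solution inside $L^2(\Omega;V_h)$.
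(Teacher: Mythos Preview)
Your proposal is correct and follows essentially the same route as the paper: assemble the pathwise solution from Theorem~4.1 and Lemma~4.4, then invoke the Pembery--Spence framework together with Lemmas~4.1--4.3 to obtain existence and uniqueness in $L^2(\Omega;V_h)$. The only cosmetic difference is that the paper cites Theorems~2.8 and~2.9 of \cite{r17} as black boxes for the existence and uniqueness steps, whereas you unpack their content explicitly (the integration of the pathwise identity for existence, and the localization/separability argument for uniqueness).
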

 \begin{proof}
      Together with Theorem 4.1 and Lemma 4.4 implies there exists a unique solution $u(\eta)$ to VP 3 for any $\eta \in \Omega$ and $u(\eta) \in L^2(\Omega; V_h)$. Combining Lemma 4.1, Lemma 4.3 and Theorem 2.8 in \cite{r17} shows this $u(\eta)$ is a solution to VP 2. Conversely, any solution to VP 2 is also the solution to VP 3 for a.s. $\eta$ (see Theorem 2.9 in \cite{r17}). So uniqueness of VP 3 directly implies uniqueness of VP 2.
 \end{proof}
 Then we can direct integrate the inequality in Theorem 4.1 with respect to $\eta$ and apply \eqref{eq4.10}-\eqref{eq4.11} to get the \textit{a priori} bound given by the following theorem.
 \begin{Th}
     Assume $u\in V_h(\eta)$ is the solution to Variation problem 1 corresponding to $f(\eta)$ and $g(\eta)$ for given $\eta \in \Omega$ which means $\tilde{u}(\eta)\in L^2(\Omega;V_h)$ is the solution to Variation problem 2. They respectively satisfy the bound  \begin{align*}
&\int_\Omega\|u\|^2_{H^1(D_h(\eta))^2}\mathrm{d}\,\mathbb{P}\\ &\le (H-m+1)^2(C_4(\omega, h )+(C_5(\omega, h )+(C_6(\omega, h,L_0) )^2\int_{\Omega}\|g\|^2_{H^1(D_h(\eta))^2}\mathrm{d}\,\mathbb{P},
     \end{align*} and
     \begin{align*} &\int_\Omega\|\tilde{u}\|^2_{H^1(D_h)^2}\mathrm{d}\,\mathbb{P}\\ &\le (H-m+1)^2(C_4(\omega, h )+(C_5(\omega, h )+(C_6(\omega, h,L_0) )^2\int_{\Omega}\|\tilde{g}\|^2_{H^1(D_h)^2}\mathrm{d}\,\mathbb{P}.
     \end{align*}
 \end{Th}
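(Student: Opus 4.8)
The plan is to obtain both estimates by integrating the pointwise deterministic bound of Theorem 4.1 over the probability space $\Omega$; the only real work is to verify that this integration is legitimate and that the multiplicative constant is frozen independently of the sample $\eta$. First I would record that, for every $\eta \in \Omega$, Theorem 4.1 applied to the VP~1 solution $u^*(\eta)=u(\eta)\circ\mathcal{H}^{-1}$ gives
\[
\|u^*(\eta)\|_{H^1(D_h(\eta))^2}\le (h-m+2)\big(C_4(\omega,h)+C_5(\omega,h)+C_6(\omega,h,L_0)\big)\|g(\eta)\|_{H^1(D_h(\eta))^2},
\]
with $L_0=M_0+L$. The crucial point is that the right-hand constant carries no $\eta$-dependence: the uniform hypothesis $\|f(\eta)-f_0\|_{1,\infty}\le M_0$ forces $L(\eta)\le L_0$ for every sample, so the single Lipschitz constant $L_0$ controls all of them and $C_4,C_5,C_6$ are fixed once and for all. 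Squaring, integrating in $\mathbb{P}$, and pulling the sample-free constant outside the integral then yields the first bound, using $H=h+1$ so that $h-m+2=H-m+1$. To legitimize the integration I would invoke Lemma 4.4, which provides strong measurability of $u(\eta)$ and $\|u(\eta)\|_{H^1(D_h)^2}^2\in L^1(\Omega)$, together with Condition 4.1 giving $\|\tilde g(\eta)\|^2\in L^1(\Omega)$; these guarantee that both integrands are genuinely integrable.

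For the second bound, which is stated on the fixed reference domain $D_h$ for the VP~2 solution $\tilde u(\eta)=u(\eta)$ and the transported source $\tilde g(\eta)$, I would pass between physical and reference domains through the change-of-variables identities already established in the proof of Lemma 4.4. Concretely, \eqref{eq4.10} controls $\|u(\eta)\|_{H^1(D_h)^2}^2$ by a constant multiple of $\|u^*(\eta)\|_{H^1(D_h(\eta))^2}^2$, while \eqref{eq4.11} controls $\|g(\eta)\|_{H^1(D_h(\eta))^2}^2$ by $\|\tilde g(\eta)\|_{H^1(D_h)^2}^2$. Chaining these with the deterministic estimate above and integrating over $\Omega$ produces the second bound. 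What makes this harmless is that the Jacobian factors $\det\mathcal{J}_{\mathcal H}$ and $\mathcal{J}_{\mathcal{H}^{-1}}$ are bounded uniformly in $\eta$ by \eqref{eq4.8}--\eqref{eq4.9}, so the transform contributes only a sample-independent factor that is absorbed into the generic constant.

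The arithmetic here is routine; the one delicate point is the uniformity in $\eta$. I expect the main obstacle to be justifying that the deterministic constant is truly sample-independent, i.e.\ that the uniform Lipschitz bound $L_0=M_0+L$, together with the uniformly bounded transforms of \eqref{eq4.8}--\eqref{eq4.9}, permits freezing $C_4,C_5,C_6$ \emph{before} integrating rather than inside the integral. Once that uniformity is secured, Tonelli's theorem applied to the nonnegative integrands closes both inequalities with no further subtlety.
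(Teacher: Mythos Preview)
Your proposal is correct and matches the paper's approach exactly: the paper simply says to integrate the inequality of Theorem~4.1 over $\Omega$ and apply \eqref{eq4.10}--\eqref{eq4.11}, which is precisely what you do. Your additional care in invoking Lemma~4.4 and Condition~4.1 to justify integrability, and in noting that $L(\eta)\le L_0$ freezes the constants uniformly in $\eta$, makes explicit what the paper leaves implicit.
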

\section{Conclusion}
This paper establishes the well-posedness of deterministic and random elastic scattering from unbounded rough surface. An \textit{a priori} bound explicitly with frequencies is given for deterministic case and extended to random case. Future work will focus on elastic scattering with an incident plane wave, which is still remained unsolved since the Rellich identity is not valid any more and additional difficulties arises in this case. 
%% The Appendices part is started with the command \appendix;
%% appendix sections are then done as normal sections

%% If you have bibdatabase file and want bibtex to generate the
%% bibitems, please use
%%
 \bibliographystyle{elsarticle-num} 
 \bibliography{cas-refs}

%% else use the following coding to input the bibitems directly in the
%% TeX file.

% \begin{thebibliography}{00}

% %% \bibitem{label}
% %% Text of bibliographic item

% \bibitem{}

% \end{thebibliography}
\end{document}